\numberwithin{equation}{section}
\newcommand{\R}{\mathbb R}
\newcommand{\C}{\mathbb C}
\newcommand{\N}{\mathbb N}
\newcommand{\Z}{\mathbb Z}
\newcommand{\be}{\begin{equation}}
\newcommand{\ee}{\end{equation}}
\newcommand{\ba}{\begin{eqnarray}}
\newcommand{\ea}{\end{eqnarray}}
\newtheorem{theorem}{Theorem}[section]
\newtheorem{proposition}[theorem]{Proposition}
\newtheorem{remark}[theorem]{Remark}
\newtheorem{lemma}[theorem]{Lemma}
\newtheorem{corollary}[theorem]{Corollary}
\begin{document}

\title[Null controllability of one-dimensional parabolic equations]{Null controllability of one-dimensional parabolic equations by  the flatness approach}

\author{Philippe Martin}
\address{Centre Automatique et Syst\`emes, MINES ParisTech, PSL Research University\\
60 boulevard Saint-Michel, 75272 Paris Cedex 06, France}
\email{philippe.martin@mines-paristech.fr}

\author{Lionel Rosier}
\address{Centre Automatique et Syst\`emes, MINES ParisTech, PSL Research University\\
60 boulevard Saint-Michel, 75272 Paris Cedex 06, France}
\email{lionel.rosier@mines-paristech.fr}

\author{Pierre Rouchon}
\address{Centre Automatique et Syst\`emes, MINES ParisTech, PSL Research University\\
60 boulevard Saint-Michel, 75272 Paris Cedex 06, France}
\email{pierre.rouchon@mines-paristech.fr}

\subjclass{}
\begin{abstract}                          
We consider linear one-dimensional parabolic equations with space dependent coefficients that are only measurable and that may be degenerate or singular.
Considering generalized Robin-Neumann boundary conditions at both extremities, we prove the null controllability with one boundary control by following the flatness approach, which provides
explicitly the control and the associated trajectory as series. Both the control and the trajectory have a Gevrey regularity in time related to the $L^p$ class of the coefficient in front of $u_t$.
The approach applies in particular to the (possibly degenerate or singular) heat equation $(a(x)u_x)_x-u_t=0$ with $a(x)>0$ for a.e.  $x\in (0,1)$ and $a+1/a \in L^1(0,1)$, or to the heat equation with inverse square 
potential $u_{xx}+(\mu  / |x|^2)u-u_t=0$
with $\mu\ge 1/4$. 
\end{abstract}

\keywords{degenerate parabolic equation; singular coefficient; null controllability, Gevrey functions; flatness}

\maketitle
\section{Introduction}
The null controllability of parabolic equations has been extensively investigated since several decades. After the pioneering work in \cite{FR,jones, LK}, mainly concerned with the 
one-dimensional case, there has been significant  progress in the general N-dimensional case \cite{FI,imanuvilov,LR} by using Carleman estimates.
The more recent developments of the theory were concerned with discontinuous coefficients \cite{AE,BDL,FZ,lerousseau}, degenerate coefficients \cite{ACF,BCG,BL,CFR,CMV04,CMV08,CMV09,FT}, or singular coefficients \cite{cazacu,ervedoza,VZ}.   

In \cite{AE}, the authors derived the null controllability of a linear one-dimensional parabolic equation with (essentially bounded) measurable coefficients. The method of proof combined the
Lebeau-Robbiano  approach \cite{LR} with some complex analytic arguments. 
 
Here, we are concerned with the null controllability of the system 
\ba
(a(x)u_x)_x +b(x)u_x + c(x)u-\rho (x) u_t&=&0, \quad x\in (0,1),\ t\in (0,T), \label{B1}\\
\alpha _0u(0,t)+\beta _0 (au_x)(0,t)&=& 0, \quad t\in (0,T), \label{B2}\\
\alpha _1u(1,t)+\beta _1 (au_x)(1,t)&=& h(t), \quad t\in (0,T), \label{B3}\\
u(x,0)&=& u_0(x), \quad x\in (0,1), \label{B4}
\ea
where $(\alpha _0,\beta _0),(\alpha _1,\beta _1)\in \R ^2 \setminus \{ (0,0)\} $ are given, 
$u_0\in L^2(0,1)$ is the initial state and $h\in L^2(0,T)$ is the control input. 

The given functions $a,b,c,\rho $ will be assumed to fulfill the following conditions 
\ba
&&a(x) >0  \textrm{ and } \rho (x)>0 \textrm{ for a.e. } x\in (0,1), \label{B11}\\
&&(\frac{1}{a},\frac{b}{a}, c, \rho ) \in[ L^1(0,1) ]^4,\label{B12}\\
&&\exists K\ge 0, \quad \frac{c(x)}{\rho (x)}\leq K \  \textrm{ for a.e. } x\in (0,1),\label{B13}\\
&&\exists p\in (1,\infty ],\quad 
a^{1-\frac{1}{p} } \rho \in L^p(0,1). \label{B14}
\ea
The assumptions \eqref{B11}-\eqref{B14} are clearly less restrictive than the assumptions from \cite{AE}: 
\be
a,b,c,\rho\in L^\infty(0,1) \textrm{ and } a(x)>\varepsilon, \ \rho (x) > \varepsilon>0 \textrm{ for a.e. } x\in (0,1) 
\ee
for some $\varepsilon >0$.

Let us introduce some notations. Let $B$ be a Banach space with norm $\Vert \cdot \Vert _B$. For any $t_1<t_2$ and $s\ge 0$, we denote 
by  $G^s([t_1,t_2],B)$ the class of (Gevrey) functions $u\in C^\infty ([t_1,t_2],B)$ for which there exist positive constants $M,R$ such that 
  \be
  \label{BB1}
  \Vert u^{(p)}(t)\Vert _B \le M\frac{p!^s}{R^p}\quad \forall t\in [t_1,t_2], \ \forall p\ge 0.
  \ee 
  When $(B,\Vert \cdot\Vert _B)=(\R , |\cdot |)$, $G^s([t_1,t_2],B)$ is merely denoted $G^s([t_1,t_2] )$. 
  Let 
 \[
 L^1_\rho :=\{ u:(0,1)\to \R; \ \ || u ||_{L^1_\rho} :=\int_0^1 |u(x)| \rho (x) dx <\infty \} .
 \]
Note that $L^2(0,1)\subset L^1_\rho$ if $\rho\in L^2(0,1)$.
The main result in this paper is the following 
\begin{theorem}
\label{thm1}
Let the functions $a,b,c,\rho:(0,1)\to \R$ satisfy \eqref{B11}-\eqref{B14} for some numbers $K\ge 0$, $p \in (1,\infty ]$. 
Let $(\alpha _0,\beta _0),(\alpha _1,\beta _1)\in \R ^2 \setminus \{ (0,0)\} $ and $T>0$. 
Pick any $u_0\in L^1_\rho$ and any $s\in (1,2-1/p)$. Then there exists a function $h\in G^s([0,T])$, that may be given explicitly as a series, such that 
the solution $u$ of \eqref{B1}-\eqref{B4} satisfies $u(.,T)=0$. Moreover $u\in G^s([\varepsilon ,T], W^{1,1}(0,1))$ and 
$au_x\in G^s([\varepsilon ,T], C^0([0,1]))$  for all $\varepsilon \in (0,T)$. 
\end{theorem}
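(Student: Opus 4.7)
The plan is to implement the flatness approach, which produces the control and trajectory as explicit series. The argument splits $[0,T]$ into two phases: on $[0,T/2]$ take $h\equiv 0$ and let the self-adjoint parabolic semigroup associated with $u\mapsto -\rho^{-1}[(au_x)_x+bu_x+cu]$ on a weighted $L^2$ space carry $u_0\in L^1_\rho$ to a smooth state $u(\cdot,T/2)$; on $[T/2,T]$ parametrize solutions by a Gevrey flat output $y(t)$, pick $y$ so that the parametrized trajectory matches $u(\cdot,T/2)$ and is flat to all orders at $t=T$, and read off $h$ from the boundary at $x=1$.

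I would seek the solution in the form
\be
u(x,t)=\sum_{n\ge 0} y^{(n)}(t)\,f_n(x),\qquad h(t)=\sum_{n\ge 0} y^{(n)}(t)\bigl[\alpha _1 f_n(1)+\beta _1(af_n')(1)\bigr],
\ee
with profiles built by the recurrence $(af_n')'+bf_n'+cf_n=\rho\, f_{n-1}$ (with $f_{-1}\equiv 0$), the boundary relation $\alpha _0 f_n(0)+\beta _0(af_n')(0)=0$, and a normalization fixing $(f_n(0),(af_n')(0))$ built from two linearly independent solutions of $(av')'+bv'+cv=0$. Formal substitution shows that \eqref{B1}--\eqref{B3} are satisfied termwise, so convergence of the series in the appropriate Gevrey scale is the analytic content.

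The technical heart is the profile estimate
\be
\|f_n\|_{L^\infty}+\|af_n'\|_{L^\infty}\le M R^n (n!)^{2-\frac{1}{p}},
\ee
obtained by rewriting the recurrence in Volterra form with kernel built from $\int_0^x 1/a$ (the two fundamental solutions of the homogeneous spatial ODE exist under \eqref{B12}), and bounding the source $\rho f_{n-1}$ by H\"older's inequality: $\int \rho |f_{n-1}|\le \|a^{1-1/p}\rho\|_{L^p}\|a^{1/p-1}f_{n-1}\|_{L^{p'}}$. This is exactly where \eqref{B14} enters; it produces a factor $(n!)^{1-1/p}$ per step, which combined with the two spatial integrations needed to pass from $f_{n-1}$ to $(f_n,af_n')$ pins the exponent at $2-1/p$. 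Paired with $y\in G^s([T/2,T])$ for any $s<2-1/p$, the series and its termwise derivatives converge absolutely and uniformly on $[0,1]\times [T/2+\varepsilon,T]$, producing the Gevrey regularity of $u$ and $au_x$ claimed in the theorem. I would construct $y$ itself as $\sum_k c_k \phi_k(t)$ with $\phi_k$ a fixed $G^s$ step flat at $T$, the coefficients $c_k$ being fixed by matching the time-Gevrey expansion of $u(\cdot,t)$ near $T/2$ inherited from phase~1.

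The main obstacle is the growth estimate on $(f_n,af_n')$: the classical ODE Picard iteration is unavailable when $a$ vanishes or blows up, so the recursion must be handled in weighted integral form, and the H\"older balance between $\rho$ and $a^{1/p-1}$ is what produces the sharp threshold $s=2-1/p$. A secondary difficulty is compatibility, namely that the smoothed state $u(\cdot,T/2)$ actually lies in the range of the flat parametrization; I would handle this by exploiting the Gevrey-in-time regularization of the free evolution during phase~1 so that $u(\cdot,T/2)$ is automatically the initial value of a $G^s$-in-$t$ trajectory, from which the Taylor data $y^{(n)}(T/2)$ can be read off directly rather than via an $L^2$ spectral expansion.
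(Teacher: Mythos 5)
Your overall strategy is the right one and coincides in outline with the paper's: smooth the state on a first time interval with $h\equiv 0$, then parametrize the trajectory on the remaining interval by a flat output via profiles defined by a spatial \emph{Cauchy} problem, with \eqref{B14} entering through a H\"older estimate that produces the exponent $2-1/p$. But there are genuine gaps. A first, small but telling one: your central profile estimate is written as $\|f_n\|_{L^\infty}+\|af_n'\|_{L^\infty}\le MR^n(n!)^{2-1/p}$ with the factorial in the \emph{numerator}; as stated this makes $\sum_n y^{(n)}f_n$ divergent for $y\in G^s$ with $s>1$. What is needed (and what the paper proves in Proposition \ref{prop3}) is decay, $\|g_i\|_{W^{2,p}}\le C R^{-i}(i!)^{-(2-1/p)}$. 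Moreover, your recursion $(af_n')'+bf_n'+cf_n=\rho f_{n-1}$ is implicit in $f_n$ (the $b$ and $c$ terms involve $f_n$ itself), so each step requires a Volterra resolvent/Gronwall argument whose constants must not destroy the factorial gain; the paper sidesteps this entirely by first transforming \eqref{B1}--\eqref{B4} into the canonical form $u_{xx}=\hat\rho u_t$ (Proposition \ref{prop1}, via $B=\int_0^x b/a$, an auxiliary positive solution $v$ of $-(\tilde a v_x)_x+\tilde c v=0$, and a change of the space variable), after which $g_i(x)=\int_0^x\int_0^s\hat\rho\, g_{i-1}$ and the sharp bound follows from an elementary iterated H\"older lemma. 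If you work with the original operator you must carry the $b,c$ terms through the induction, which you have not done.

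The more serious gap is the matching at the intermediate time. Knowing that the free evolution is Gevrey in $t$ at $t=T/2$ does not tell you that $u(\cdot,T/2)$ lies in the \emph{range} of the parametrization $x\mapsto\sum_n y^{(n)}(T/2)f_n(x)$ for a single scalar output $y$, nor does it give the quantitative bounds $|y^{(n)}(T/2)|\le C(n!)^s/R^n$ that your Borel-type construction of $y$ as a sum of flat bumps would require. The paper's mechanism is Proposition \ref{prop4}: every eigenfunction expands in the generating functions as $e_n=\zeta_n\sum_i(-\lambda_n)^ig_i$ with $|\zeta_n|\le C(1+|\lambda_n|^{3/2})$, so the spectral sum $\sum_nc_ne^{-\lambda_n\tau}e_n$ can be rearranged (Fubini, justified by the decay of the $g_i$ and the eigenvalue lower bound $\lambda_n\gtrsim n$ of Proposition \ref{prop2}) into $\sum_iy^{(i)}(\tau)g_i$ with the explicit choice $y(t)=\varphi(t)\sum_nc_n\zeta_ne^{-\lambda_nt}$, which is automatically in $G^s$ and flat at $T$ thanks to the Gevrey cutoff $\varphi$. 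This expansion of eigenfunctions in generating functions is the key lemma of the whole proof and is absent from your argument; without it (or an equivalent representation theorem) the claim that the smoothed state is reachable by the flat parametrization is unsupported. A lesser omission: for $u_0\in L^1_\rho$ only, even defining the phase-one evolution requires embedding $L^1_\rho$ into the dual $H'$ of the form domain, since $L^1_\rho\not\subset L^2_\rho$ in general.
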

Clearly, Theorem \ref{thm1} can be applied to parabolic equations with discontinuous coefficients that may be degenerate or singular at a point (or more generally at a sequence of points). 
The proof of it is not based on the classical duality
approach, in the sense that it does not rely on the proof of some observability inequality for the adjoint equation. It follows the flatness approach developed 
in \cite{Laroc2000PhD,LarocM2000MTNS,LMR,MRR1D,MRR2D,
MRRND,Meurer2012book}. This direct approach gives explicitly both the control and the trajectory as series, which leads to efficient numerical schemes by taking partial sums in the series \cite{MRRND, MRRPE}. 
Let us describe its main steps. 
In the first step, following \cite{AE}, we show that after a series of changes of dependent/independent variables, system \eqref{B1}-\eqref{B4} can be put into the canonical form 
\ba
u_{xx} -\rho (x) u_t&=&0, \quad x\in (0,1),\ t\in (0,T), \label{C1}\\
\alpha _0u(0,t)+\beta _0 u_x(0,t)&=& 0, \quad t\in (0,T), \label{C2}\\
\alpha _1u(1,t)+\beta _1 u_x(1,t)&=& h(t), \quad t\in (0,T), \label{C3}\\
u(x,0)&=& u_0(x), \quad x\in (0,1), \label{C4}
\ea
where $\rho (x)>0$ a.e. in $(0,1)$ and $\rho \in L^p(0,1)$ with $p\in (1,\infty ]$. 
In the second step, following \cite{MRR1D, MRRND}, we seek $u$ in the form 
\ba
\label{C5} u(x,t)&=& \sum_{n\ge 0} e^{-\lambda _n t} e_n(x), \quad x\in (0,1), \ t\in [0,\tau ], \\ 
\label{C6} u(x,t) &=& \sum_{i\ge 0} y^{ (i) } (t) g _i (x) , \quad  x\in (0,1), \ t\in [\tau ,T], 
\ea 
where $\tau\in (0,T)$ is any intermediate time; $e_n:(0,1)\to \R$  (resp. $\lambda _n\in \R$) denotes the $n^{th}$ {\em eigenfunction} (resp. {\em eigenvalue}) 
associated with \eqref{C1}-\eqref{C3} and satisfying  \cite{Laroc2000PhD,LarocM2000MTNS}
\ba
\label{C7} -e_n'' &=& \lambda _n\,  \rho \, e_n, \quad x\in (0,1)\\
\label{C8} \alpha _0e_n(0)+\beta _0e_n'(0)&=&0,\\
\label{C9} \alpha _1e_n(1)+\beta _1e_n'(1)&=&0,
\ea
while $g_i:(0,1)\to\R$ is defined inductively as the solution to the Cauchy problem
\ba
\label{C10} g_0'' &=& 0, \quad x\in (0,1)\\
\label{C11} \alpha _0 g_0 (0)+\beta _0  g_0 '(0)&=&0,\\
\label{C12} \beta _0 g_0 (0) - \alpha _0 g_0 '(0)&=&1
\ea
for $i=0$, and to the Cauchy problem
\ba
\label{C13} g_i'' &=& \rho\,  g_{i-1}, \quad x\in (0,1)\\
\label{C14} g_i(0)&=&0,\\
\label{C15} g_i'(0)&=&0
\ea
for $i\ge 1$. Expanding $u$ on generating functions as in~\eqref{C6} rather than on powers of~$x$ as in~\cite{LMR,MRR1D} was 
introduced in~\cite{LarocM2000MTNS} and studied in~\cite{Laroc2000PhD}.

The fact that the generating function $g_i$ is defined as the solution of a {\em Cauchy problem}, rather than the solution of a {\em boundary-value problem}, is crucial in the analysis 
developed here. First, it allows to prove that {\em every} initial state in the space $L^1_\rho$ (and not only states in some restricted class of Gevrey functions)
can be driven to 0 in time $T$. Secondly, from \eqref{C13}-\eqref{C15},  we see by an easy induction on $i$ that for $\rho \in L^\infty(0,1)$, the function $g_i$ is uniformly bounded
by $ C/(2i)!$, and hence the series in \eqref{C6} is indeed convergent when $y\in G^s([\tau ,T])$ with $1<s<2$.
 
The corresponding control function $h$ is given explicitly as  
\[h(t) =
\left\{  
\begin{array}{ll} 
0 \quad &\textrm{ if } 0 \le t\le \tau,\\
\sum_{i\ge 0} y^{ (i) } (t) ( \alpha _1 g_i (1) + \beta _1 g_i ' (1) )  \quad &\textrm{ if } \tau <  t\le T.  
\end{array}
\right.
\]

It is easy to see that the function $u(x,t)$ defined in \eqref{C6} satisfies (formally) \eqref{C1}, and also the condition $u(x,T)=0$
if $y^{(i)}(T)=0$ for all $i\in \N$, so that the null controllability can be established for {\em some} initial states. 
The main issue is then to extend it to {\em every} initial state $u_0\in L^1_\rho$. Following \cite{MRR1D, MRR2D,MRRND}, 
we first use the strong smoothing effect of the heat equation to smooth out the state function in the time interval $(0,\tau)$. Next, to ensure that the two expressions of $u$ given
in \eqref{C5}-\eqref{C6} coincide at $t=\tau$, we have to relate the eigenfunctions $e_n$ to the generating functions $g_i$. 

It will be shown that any eigenfunction $e_n$ can be expanded in terms of the generating functions $g_i$ as
\be
\label{C16} 
e_n(x)= \zeta_n \sum_{i\ge 0} (-\lambda _n)^i g_i (x)
\ee 
with $\zeta _n\in \R$. Note that, for $\rho \equiv 1$ and $(\alpha _0,\beta _0,\alpha _1,\beta _1)=(0,1,0,1)$,  
$\lambda _n =(n\pi )^2$ for all $n\ge 0$, $e_0(x)=1$ and $e_n(x)=\sqrt{2} \cos (n\pi x)$ for $n\ge 0$ while
$g_i(x) = x^{2i}/(2i)!$, so that \eqref{C16} for $n\ge 1$ is nothing but the classical Taylor expansion of $\cos (n\pi x)$ around $x=0$:
\be
\label{C16bis}
\cos (n\pi x ) = \sum_{i\ge 0} (-1)^i \frac{(n\pi x) ^{2i} }{(2i)!}\cdot
\ee
Thus \eqref{C16} can be seen as a natural extension of \eqref{C16bis}, in which the generating 
functions $g_i$, {\em a priori} not smoother than $W^{2,p}(0,1)$, replace the functions $x^{2i}/(2i)!$. 
 
The condition \eqref{B14} is used to prove the estimate 
\[
|g_i(x)| \le \frac{C}{R^{2i} (i!)^{2-\frac{1}{p}} }
\]
needed to ensure the convergence of the series in \eqref{C6} when $y\in G^s( [\tau ,T])$ with $1<s<2-1/p$. 

Theorem \ref{thm1} applies in particular to any system 
\ba
(a(x)u_x)_x -u_t&=&0, \quad x\in (0,1),\ t\in (0,T), \label{B31}\\
\alpha _0u(0,t)+\beta _0 (au_x)(0,t)&=& 0, \quad t\in (0,T), \label{B32}\\
\alpha _1u(1,t)+\beta _1 (au_x)(1,t)&=& h(t), \quad t\in (0,T), \label{B33}\\
u(x,0)&=& u_0(x), \quad x\in (0,1), \label{B34}
\ea
where $a(x)>0$ for a.e. $x\in (0,1)$ and $a+1/a\in L^1(0,1)$. (Pick $p=2$ in \eqref{B14}.) This includes the case where $a$ is measurable, positive and essentially bounded together with its inverse 
(but not necessarily piecewise continuous),
and the case where $a(x)=x^r$ with $-1<r<1$. (Actually any $r\le -1$ is also admissible, by picking $p>1$ sufficiently close to $1$ in \eqref{B14}.) Note that our result applies as well to  $a(x)=(1-x)^r$ with $0<r<1$, yielding a positive
null controllability result when the control is applied at the point ($x=1$) where the diffusion coefficient degenerates (see \cite[Section 2.7]{CMV08}). 
Note also that the coefficient $a(x)$ is allowed to be degenerate/singular at a {\em sequence} of points: consider e.g. $a(x):=|\sin (x^{-1})|^r$ with $-1<r<1$. Then $a+1/a\in L^1(0,1)$.  

The null controllability of \eqref{B31}-\eqref{B34} for $a(x)=x^r$ with $0<r<2$ was established (in appropriate spaces)
in \cite{CMV08}. The situation when $1/a\not\in L^1(0,1)$ (e.g. $a(x)=x^r$ with $1\le r<2$) is beyond these notes, and it will be considered elsewhere.

A null controllability result with an internal control can be deduced from \eqref{thm1}. Its proof is given in appendix, for the sake of completeness.
\begin{corollary}
\label{cor-intro}
Assume given an open set $\omega =(l_1,l_2)$ with $0<l_1<l_2<1$, and let us consider the following
control system
  \ba
(a(x)u_x)_x +b(x)u_x + c(x)u-\rho (x) u_t&=&\chi _{\omega }f(x,t), \quad x\in (0,1),\ t\in (0,T), \label{B100}\\
\alpha _0u(0,t)+\beta _0 (au_x)(0,t)&=& 0, \quad t\in (0,T), \label{B200}\\
\alpha _1u(1,t)+\beta _1 (au_x)(1,t)&=& 0, \quad t\in (0,T), \label{B300}\\
u(x,0)&=& u_0(x), \quad x\in (0,1), \label{B400}
\ea
where $u_0\in L^1_\rho$ is any given initial data,  and $a$, $b$, $c$, $\rho$, $p$, $K$, $s$, $(\alpha _0,\beta _0)$ and $(\alpha _1, \beta _1)$ are as in 
Theorem \ref{thm1}.  Then one can find a control input $f\in L^2(0,T,L^2_a(\omega ))$ such that the solution $u$ of \eqref{B100}-\eqref{B400} satisfies $u(x,T)=0$ for all $x\in (0,1)$.  
\end{corollary}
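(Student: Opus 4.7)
The strategy is to reduce the internal controllability problem to two boundary controllability problems, each addressed by Theorem \ref{thm1}, and to stitch the resulting trajectories together with a smooth cutoff whose derivative is supported in $\omega$.

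Specifically, I would first apply Theorem \ref{thm1} on the subinterval $(0,l_2)$, keeping the original boundary condition at $x=0$ and prescribing a boundary control (say of Dirichlet type) at $x=l_2$. After an affine rescaling of $(0,l_2)$ to $(0,1)$, the hypotheses \eqref{B11}--\eqref{B14} are preserved on $(0,l_2)$, so this produces a Gevrey boundary control and a trajectory $v_1$ on $(0,l_2)\times(0,T)$ satisfying $v_1(\cdot,0)=u_0|_{(0,l_2)}$ and $v_1(\cdot,T)=0$. Symmetrically, applying Theorem \ref{thm1} on $(l_1,1)$ with the original boundary condition at $x=1$ and a boundary control at $x=l_1$ yields a trajectory $v_2$ on $(l_1,1)\times(0,T)$ with $v_2(\cdot,0)=u_0|_{(l_1,1)}$ and $v_2(\cdot,T)=0$.

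Next, I would fix $\eta\in C^\infty([0,1])$ with $\eta\equiv 1$ on $[0,l_1]$ and $\eta\equiv 0$ on $[l_2,1]$, so that $\operatorname{supp}\eta'\subset\overline\omega$, and set
\[
u(x,t):=\eta(x) v_1(x,t)+(1-\eta(x)) v_2(x,t),
\]
with the convention that the first summand is read as $0$ on $[l_2,1]$ and the second as $0$ on $[0,l_1]$. A direct check gives $u(\cdot,0)=u_0$ on $(0,1)$, $u(\cdot,T)=0$, and the homogeneous boundary conditions at $x=0,1$ are inherited from $v_1,v_2$ because $\eta$ and $\eta'$ are constant near each endpoint. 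Writing $L:=(a\partial_x)\partial_x+b\partial_x+c$, a routine calculation then yields
\[
(au_x)_x+bu_x+cu-\rho u_t \;=\; [L,\eta](v_1-v_2) \;=\; \bigl(a\eta'(v_1-v_2)\bigr)_x+\eta'\bigl(a(v_1-v_2)_x+b(v_1-v_2)\bigr),
\]
a first-order differential expression in $v_1-v_2$ whose output is supported in $\operatorname{supp}\eta'\subset\overline\omega$. Defining $f$ as this expression on $\omega$ (and arbitrary elsewhere), the function $u$ solves \eqref{B100}--\eqref{B400} with internal control $\chi_\omega f$.

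The main obstacle will be verifying $f\in L^2(0,T;L^2_a(\omega))$. For $t$ bounded away from $0$, the regularity part of Theorem \ref{thm1} supplies $v_j\in G^s([\varepsilon,T],W^{1,1}(0,1))$ and $a(v_j)_x\in G^s([\varepsilon,T],C^0([0,1]))$, which makes the commutator bounded, hence trivially in $L^2$, on $[\varepsilon,T]$. The delicate point is integrability of $f$ up to $t=0$, since the data is only in $L^1_\rho$. I would handle this by interposing a short free-evolution phase on $[0,\tau]$ to regularize $u_0$, and then carrying out the above two-sided boundary controllability construction on $[\tau,T]$; the parabolic smoothing combined with the quantitative Gevrey estimates used in the proof of Theorem \ref{thm1} should yield the required integrability of $f$ across $t=0$, with the weight $a$ in $L^2_a(\omega)$ absorbing the term $(a\eta'(v_1-v_2))_x$ by exploiting $1/a\in L^1$.
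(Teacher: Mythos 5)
Your overall architecture --- two boundary-controlled auxiliary trajectories produced by Theorem~\ref{thm1}, glued by a cutoff whose derivative is supported in $\omega$ --- is exactly the paper's (the paper runs both auxiliary problems on the whole interval, controlling from the two opposite endpoints, rather than on subintervals, but that is a minor variation). The genuine gap is \emph{where} you perform the cutoff. Taking $\eta\in C^\infty([0,1])$ in the physical variable $x$, your commutator contains the term $\bigl(a\eta'(v_1-v_2)\bigr)_x$. Under the standing hypotheses $a$ is only measurable (all that is assumed is $1/a\in L^1(0,1)$, $b/a\in L^1(0,1)$ and \eqref{B14}); it need not be weakly differentiable, nor even locally bounded, so $a\eta'(v_1-v_2)$ is in general not in $W^{1,1}$ and $\bigl(a\eta'(v_1-v_2)\bigr)_x$ is a distribution, not a function. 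Consequently the right-hand side you produce cannot be written as $\chi_\omega f$ with $f\in L^2(0,T,L^2_a(\omega))$, and no amount of exploiting $1/a\in L^1$ in the weighted norm repairs this: the obstruction is that $f$ is not a function at all. The paper avoids it by first passing to the canonical form $\hat u_{yy}-\hat\rho\,\hat u_t=\chi_{\hat\omega}\hat f$ in the variable $y$ (where the principal coefficient is $1$) and cutting off there; the commutator is then $\varphi''(\hat u^1-\hat u^2)+2\varphi'(\hat u^1_y-\hat u^2_y)$, an honest function with the regularity supplied by Theorem~\ref{thm3}. Equivalently, in your setting you would have to use a cutoff adapted to the coefficients, $\eta=\psi\circ y$ with $\psi$ smooth, so that $a\eta'$ is regular.

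A second, smaller issue is the integrability of $f$ down to $t=0$, which you flag but do not resolve. The free-evolution phase on $[0,\tau]$ is already built into the trajectories furnished by Theorem~\ref{thm1} (the control vanishes there), so ``interposing'' one adds nothing; what is actually needed is a quantitative bound such as $\hat u^1,\hat u^2\in L^2(0,T,H^1(0,1))$, which the paper obtains by an energy estimate (multiplying the equation by $\hat u^j$, integrating over $(0,1)\times(0,t)$, and using Gronwall's lemma together with Lemma~\ref{lem1} to absorb the boundary terms). With that bound the commutator is square-integrable in time on all of $(0,T)$, and transporting back to the $x$ variable the Jacobian factors combine to give $\int_0^T\!\!\int_\omega|f|^2a\,dx\,dt\le C\int_0^T\!\!\int_{\hat\omega}|\hat f|^2\,dy\,dt<\infty$.
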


Another important family of heat equations with variable coefficients is those with inverse square potential localized at the boundary, namely
\ba
u_{xx} +\frac{\mu}{x^2}u -u_t&=&0, \quad x\in (0,1),\ t\in (0,T), \label{B41}\\
u(0,t)&=& 0, \quad t\in (0,T), \label{B42}\\
\alpha _1u(1,t)+\beta _1 u_x(1,t)&=& h(t), \quad t\in (0,T), \label{B43}\\
u(x,0)&=& u_0(x), \quad x\in (0,1), \label{B44}
\ea
where $\mu\in \R$ is a given number. Note that Theorem \ref{thm1} cannot be applied to \eqref{B41}-\eqref{B44}, 
for $c(x)=\mu x^{-2}$ is not integrable on $(0,1)$. It was proved in \cite{cazacu} that \eqref{B41}-\eqref{B44} is null controllable
in $L^2(0,1)$ when $\mu \le 1/4$ by combining Carleman inequalities to Hardy inequalities. We shall show in this paper that this result
can be retrieved by the flatness approach as well. 

\begin{theorem}
\label{thm2} 
Let $\mu \in (0,1/4]$, $(\alpha _1, \beta _1)\in \R ^2 \setminus \{ (0,0) \} $, $T>0$, and $\tau \in (0,T)$. Pick any $u_0\in L^2(0,1)$ and any $s\in (1,2)$. Then there 
exists a function $h\in G^s([0,T])$  with $h(t)=0$ for $0\le  t \le \tau$ and such that the solution $u$ of \eqref{B41}-\eqref{B44} satisfies $u(T,.)=0$. Moreover,
$u\in G^s([\varepsilon , T], W^{1,1}(0,1))$  for all $\varepsilon \in (0,T)$. Finally, if $0\le \mu <1/4$ and $r>(1+\sqrt{1- 4 \mu} )/2$, 
then $x^ru_x\in  G^s([\varepsilon , T], C^0([0,1]))$  for all $\varepsilon \in (0,T)$. 
\end{theorem}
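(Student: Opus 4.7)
The plan is to imitate the flatness strategy used for Theorem~\ref{thm1}, with the generating functions $g_i$ modified near the singular endpoint $x=0$ to accommodate the inverse square potential. The homogeneous ODE $g''+(\mu/x^2)g=0$ has $\{x^{r_-},x^{r_+}\}$ as fundamental system, where $r_\pm=(1\pm\sqrt{1-4\mu})/2$ (replaced by $\{x^{1/2},x^{1/2}\log x\}$ in the critical case $\mu=1/4$), so the leading behaviour of $g_0$ at the singular endpoint must be prescribed.

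First I would set $g_0(x):=x^{r_-}$ and define $g_i$ for $i\ge 1$ as the solution of $g_i''+(\mu/x^2)g_i=g_{i-1}$ picked out by the condition $g_i(x)=O(x^{r_-+2i})$ as $x\to 0$. Variation of parameters yields the explicit formula
\[
g_i(x)=\frac{1}{r_+-r_-}\Bigl[x^{r_+}\!\int_0^x\! t^{r_-}g_{i-1}(t)\,dt-x^{r_-}\!\int_0^x\! t^{r_+}g_{i-1}(t)\,dt\Bigr],
\]
and a direct induction gives $\|g_i\|_\infty\le CR^{-2i}/(i!)^2$. Consequently $u(x,t):=\sum_i y^{(i)}(t)g_i(x)$ converges uniformly on $[0,1]\times[\tau,T]$ for every $y\in G^s([\tau,T])$ with $s\in(1,2)$, and formally solves $u_{xx}+(\mu/x^2)u-u_t=0$ together with $u(0,t)=0$.

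Next I would study the self-adjoint realization $A$ of $-\partial_x^2-\mu/x^2$ acting on functions with $x^{r_-}$ asymptotic at $0$ and satisfying the Robin condition at $x=1$. The Hardy inequality ($\mu\le 1/4$) ensures that $A$ has compact resolvent, hence an orthonormal basis of eigenfunctions $(e_n)$ of $L^2(0,1)$ with eigenvalues $\lambda_n\to\infty$; the same manipulation as in \eqref{C16} yields $e_n(x)=\zeta_n\sum_i(-\lambda_n)^i g_i(x)$ for some $\zeta_n\in\R$. Expanding $u_0\in L^2(0,1)$ on this basis and using the parabolic smoothing effect on $(0,\tau)$, the function $u(\cdot,\tau)$ becomes Gevrey regular and can be written as $\sum_i y_0^{(i)}g_i$ with $|y_0^{(i)}|\le C R^{-i}(i!)^s$.

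Finally I would pick, via a Borel-type lemma in Gevrey classes, a flat output $y\in G^s([\tau,T])$ such that $y^{(i)}(\tau)=y_0^{(i)}$ and $y^{(i)}(T)=0$ for every $i\ge 0$, and define
\[
h(t):=\begin{cases}0 & \text{if } 0\le t\le\tau,\\ \sum_{i\ge 0}y^{(i)}(t)(\alpha_1 g_i(1)+\beta_1 g_i'(1))& \text{if } \tau<t\le T.\end{cases}
\]
The resulting solution of \eqref{B41}--\eqref{B44} coincides with $\sum_i y^{(i)}(t)g_i(x)$ on $[\tau,T]$ and vanishes at $T$. The Gevrey regularities follow from pointwise bounds on $g_i,g_i'$; the endpoint statement $x^r u_x\in C^0([0,1])$ for $r>r_+$ comes from the fact that $g_0'(x)=r_- x^{r_--1}$ is the most singular term in $u_x$, so $x^r u_x$ extends continuously to $0$ as soon as $r\ge 1-r_-=r_+$. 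The main technical obstacle is the critical case $\mu=1/4$, where the fundamental solutions $x^{1/2}$ and $x^{1/2}\log x$ coalesce asymptotically and the estimates on $g_i$ and on the eigenfunction expansion acquire logarithmic losses that have to be absorbed by a mild increase of the geometric constant $R$.
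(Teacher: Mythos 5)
Your plan (redo the flatness machinery directly on the singular operator) is genuinely different from the paper's proof, which merely sets $v(x)=x^{r_-}$ with $r_\pm=(1\pm\sqrt{1-4\mu})/2$ (resp.\ $v(x)=-\sqrt{x}\ln x$ when $\mu=1/4$), changes variables $u=v(x)\,\hat u(y(x),t)$ with $y(x)=L^{-1}\int_0^x v^{-2}$, lands exactly in the canonical form \eqref{C1}--\eqref{C4} with $\hat\rho=L^2v^4\in L^\infty(0,1)$, and invokes Theorem \ref{thm3} wholesale. Your route could be made to work, but as written it contains a genuine error: you normalized the generating functions on the wrong branch at $x=0$. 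Since $r_-<1/2$, the function $x^{r_-}$ is \emph{not} in $H^1(0,1)$ near $0$; the self-adjoint realization attached to the Hardy inequality (the one used in \cite{cazacu}, and the one the theorem's solution concept refers to) has form domain inside $H^1_0$, so its eigenfunctions have a vanishing $x^{r_-}$-coefficient and behave like $x^{r_+}$ near $0$. On the other hand your series $\sum_i(-\lambda_n)^i g_i$ behaves like $x^{r_-}$ near $0$ (the terms $i\ge1$ are $O(x^{r_-+2i})$ with $r_-+2>r_+$), so the identity $e_n=\zeta_n\sum_i(-\lambda_n)^ig_i$ forces $\zeta_n=0$ and hence $e_n\equiv0$: the expansion analogous to \eqref{H10} is false for the eigenfunctions you need, and the matching of the two representations at $t=\tau$ --- the heart of the argument --- collapses. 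You must take $g_0\propto x^{r_+}$; this is precisely what the paper's change of variables produces, since the canonical generating function $g_0(y)=-y$ pulls back to $-y(x)\,x^{r_-}\propto x^{r_+}$. (If instead you insist on $g_0=x^{r_-}$, you are constructing a trajectory of a \emph{different} self-adjoint extension, i.e.\ a different boundary condition at $x=0$ than the one implicit in \eqref{B41}--\eqref{B44}, and the invocation of the Hardy inequality for compactness of the resolvent of that extension is then unjustified.)

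Two further points. First, the theorem explicitly includes the critical case $\mu=1/4$, where your variation-of-parameters kernel carries the factor $(r_+-r_-)^{-1}$ and degenerates; deferring this to ``logarithmic losses absorbed by a mild increase of $R$'' is not a proof, whereas the paper disposes of it by the explicit choice $v(x)=-\sqrt{x}\ln x$ followed by the same reduction to \eqref{C1}--\eqref{C4}. Second, your Borel-type interpolation of the jet $(y_0^{(i)})$ at $t=\tau$ is an unnecessary detour: once the eigenfunction expansion in the $g_i$ is available, the paper simply takes $y(t)=\varphi(t)\sum_n c_n\zeta_n e^{-\lambda_n t}$ with a Gevrey cut-off $\varphi$, which automatically matches all derivatives at $\tau$ and vanishes to infinite order at $T$. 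These last two items are repairable; the branch mismatch in the first paragraph is the substantive gap.
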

  
The paper is organized as follows. Section 2 is devoted to the proof of Theorem \ref{thm1}. We first show that a convenient change of variables 
transforms \eqref{B1}-\eqref{B4} into \eqref{C1}-\eqref{C4} (Proposition \ref{prop1}). Next, we show that the flatness approach can be applied to \eqref{C1}-\eqref{C4}
to yield a null controllability result (Theorem \ref{thm3}). Performing the inverse change of variables, we complete the proof of Theorem  \ref{thm1}.  Section 3 contains the proof of Theorem \ref{thm2}, which is obtained
as a consequence of Theorem \ref{thm3} after a convenient change of variables,  and 
some examples.  

\section{Proof of Theorem \ref{thm1}}
\subsection{Reduction to the canonical form \eqref{C1}-\eqref{C4}}
\label{section21}
Let $a,b,c,\rho$, and $p$ be as in \eqref{B11}-\eqref{B14}. Set 
\begin{eqnarray}
B(x)&:=& \int_0^x \frac{b(s)}{a(s)} ds,  \label{XXX1}\\
\tilde a(x) &:=& a(x) e^{B(x)} \label{XXX2}\\
\tilde c(x) &:=& (K\rho (x) -c(x))e^{B(x)}. \label{XXX3}
\end{eqnarray}
Then $B\in W^{1,1}(0,1)$, $\tilde c\in L^1(0,1)$, and 
\[
\tilde a(x)>0 \textrm{ and } \tilde c (x)\ge 0 \textrm{ for a.e. } x\in (0,1).    
\]
We introduce the solution $v$ to the elliptic boundary value problem 
\ba
\label{E1} -(\tilde a v_x)_x + \tilde c v &=& 0, \quad x\in (0,1),\\
\label{E2} v(0)=v(1)&=& 1,  
\ea
and set 
\begin{eqnarray}
\label{defu1} u_1(x,t) &:=& e^{-Kt }u(x,t),\\ 
\label{defu2} u_2(x,t) &:=& \frac{ u_1(x,t) }{ v(x) } \cdot 
\end{eqnarray}
Finally, let 
\be
\label{E3}
L:=\int_0^1 (a(s)v^2(s)e^{B(s)} )^{-1}ds, \quad y(x):=\frac{1}{L} \int_0^x (a(s) v^2 (s)e^{B(s)})^{-1} ds 
\ee
and
\be
\label{E4}
\hat u(y,t):=u_2(x,t), \qquad \hat \rho (y):=L^2a(x)v^4(x)e^{2B(x)} \rho (x) 
\ee
for $0<t<T$, $y=y(x)$ with $x\in [0,1]$.  
Then the following result holds.
\begin{proposition}
\label{prop1}
\begin{enumerate}
\item[(i)] $v\in W^{1,1}(0,1)$ and $0<v(x)\le 1$ $\forall x\in [0,1]$;
\item[(ii)] $y:[0,1]\to [0,1]$ is an increasing bijection with $y,y^{-1}\in W^{1,1}(0,1)$;
\item[(iii)] $\hat \rho (y)>0$ for a.e. $y\in (0,1)$, and $\hat \rho \in L^p(0,1)$;
\item[(iv)] $\hat u$ solves the system 
\ba
\hat u_{yy} -\hat\rho \hat u_t &=&0, \quad y\in (0,1),\ t\in (0,T), \label{D1}\\
\hat \alpha _0\hat u(0,t)+\hat \beta _0 \hat u_y(0,t)&=& 0, \quad t\in (0,T), \label{D2}\\
\hat \alpha _1\hat u(1,t)+\hat \beta _1 \hat u_y(1,t)&=&  \hat h(t):=e^{-Kt} h(t), \quad t\in (0,T), \label{D3}\\
\hat u (y(x) ,0)&=& \frac{u_0(x)}{v(x)}, \quad x\in (0,1), \label{D4}
\ea
for some $(\hat \alpha _0,\hat \beta _0),(\hat \alpha _1,\hat \beta _1)\in \R ^2\setminus \{ (0,0) \}$.  
\end{enumerate}

\end{proposition}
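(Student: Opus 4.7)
The plan is to prove the four assertions in order, with part (i) as the essential preliminary. Since $b/a \in L^1(0,1)$, the function $B$ lies in $W^{1,1}(0,1)$ and is continuous and bounded on $[0,1]$, so $e^{\pm B}$ is bounded above and away from zero; hence $\tilde a > 0$ a.e.\ with $1/\tilde a \in L^1$, and $\tilde c \ge 0$ lies in $L^1$. To solve \eqref{E1}-\eqref{E2}, I would write $v = 1 + w$ with $w \in H^1_0(0,1)$, reducing the equation to $-(\tilde a w_x)_x + \tilde c w = -\tilde c$, and apply Lax--Milgram on the Hilbert space $\{w : w(0)=w(1)=0,\ \int_0^1 \tilde a w_x^2 < \infty\}$. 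Coercivity of the associated bilinear form and continuity of the linear form follow from the weighted Poincar\'e estimate $|w(x)|\le \|1/\tilde a\|_{L^1}^{1/2}\,(\int_0^1 \tilde a w_s^2)^{1/2}$, obtained by Cauchy--Schwarz on $w(x) = \int_0^x w_s\,ds$. Once $v$ is constructed, $(\tilde a v_x)_x = \tilde c v \in L^1$ shows that $\tilde a v_x$ is absolutely continuous (hence bounded) on $[0,1]$, and $v_x = (\tilde a v_x)/\tilde a \in L^1$ gives $v \in W^{1,1}$. The upper bound $v \le 1$ follows from testing the equation for $w$ with $w^+$; the lower bound $v\ge 0$ from testing with $v^-$; and strict positivity on $[0,1]$ from a strong-maximum-principle argument: an interior zero $v(x_0)=0$ would force $(\tilde a v_x)(x_0)=0$, and the integral representation $(\tilde a v_x)(x) = \int_{x_0}^x \tilde c v\,ds$ together with $\tilde c v \ge 0$ would prevent $v$ from returning to $1$ at both endpoints.

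For (ii), part (i) implies that $v^2 e^B$ is bounded above and below by positive constants, so $(a v^2 e^B)^{-1}$ is $L^1$-equivalent to $1/a$; hence $y$ is absolutely continuous, strictly increasing (its derivative is a.e.\ positive), with $y(0)=0$ and $y(1)=1$ by the choice of $L$. The inverse $\phi = y^{-1}$ has $\phi'(y) = L(a v^2 e^B)(\phi(y))$, and the change of variables $y=y(x)$ gives $\int_0^1 \phi'(y)\,dy = \int_0^1 dx = 1$, so $\phi \in W^{1,1}$. For (iii), positivity is immediate, and the same change of variables gives
\[\int_0^1 \hat\rho(y)^p\,dy = L^{2p-1}\int_0^1 (a^{1-1/p}\rho)^p(x)\, v^{4p-2}(x)\, e^{(2p-1)B(x)}\,dx \le C\,\|a^{1-1/p}\rho\|_{L^p}^p < \infty\]
by \eqref{B14} (with the obvious modification when $p=\infty$).

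For (iv), I would carry out three successive transformations. First, $u_1 = e^{-Kt}u$ converts \eqref{B1} into $(au_{1,x})_x + bu_{1,x} + (c-K\rho)u_1 = \rho u_{1,t}$; multiplying by $e^B$ and using $(\tilde a u_{1,x})_x = e^B[(au_{1,x})_x + b u_{1,x}]$ together with $c - K\rho = -\tilde c e^{-B}$ yields $(\tilde a u_{1,x})_x - \tilde c u_1 = e^B\rho\, u_{1,t}$. Next, substituting $u_1 = v u_2$ and using $(\tilde a v_x)_x = \tilde c v$ cancels the zero-order term and leaves $\tilde a v_x u_{2,x} + (\tilde a v u_{2,x})_x = e^B\rho\,v u_{2,t}$; the key algebraic identity
\[\tilde a v_x u_{2,x} + (\tilde a v u_{2,x})_x = \tfrac{1}{v}(\tilde a v^2 u_{2,x})_x,\]
verified by expanding $(v \cdot \tilde a v u_{2,x})_x$ with the product rule, rewrites this as $(a v^2 e^B u_{2,x})_x = \rho v^2 e^B u_{2,t}$. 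Finally, setting $\hat u(y,t) = u_2(x,t)$ with $y_x = (L a v^2 e^B)^{-1}$ converts the spatial operator into $\hat u_{yy}/(L^2 a v^2 e^B)$, producing $\hat u_{yy} = L^2 a v^4 e^{2B}\rho\,\hat u_t = \hat\rho\,\hat u_t$. The boundary conditions transform cleanly because $v(0)=v(1)=1$: at $x=0$ one computes $\hat u(0,t) = u_1(0,t)$ and $\hat u_y(0,t) = L[(\tilde a u_{1,x})(0,t) - (\tilde a v_x)(0)\,u_1(0,t)]$, so the original BC $\alpha_0 u_1(0,t) + \beta_0(\tilde a u_{1,x})(0,t) = 0$ translates into $\hat\alpha_0\hat u(0,t) + \hat\beta_0 \hat u_y(0,t) = 0$ for an explicit nontrivial pair $(\hat\alpha_0,\hat\beta_0)$, and the analogous computation at $x=1$ produces $(\hat\alpha_1,\hat\beta_1)$ and absorbs $e^{-Kt}$ into $\hat h$.

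The main obstacle lies in part (i) --- constructing $v$, and establishing its $W^{1,1}$ regularity and strict positivity on $[0,1]$ using only the integrability hypotheses $1/\tilde a,\tilde c \in L^1$ rather than boundedness. Once $v$ is in hand with the correct sign and regularity, the remaining parts amount to careful but essentially algebraic change-of-variable computations in which the Liouville-type substitutions must be performed in precisely the order above in order to simultaneously remove the drift, the zero-order term, and the variable diffusion coefficient.
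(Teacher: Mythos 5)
Your overall architecture matches the paper's: construct $v$ by solving \eqref{E1}--\eqref{E2}, straighten the diffusion with the variable $y$, and push the equation and boundary conditions through. Where you differ, you are mostly sound: you apply Lax--Milgram directly in the weighted space $\{w:\ w(0)=w(1)=0,\ \int_0^1\tilde a w_x^2<\infty\}$ with a weighted Poincar\'e bound, whereas the paper first changes the independent variable to $z(x)=l^{-1}\int_0^x ds/\tilde a(s)$ so as to reduce to $-w''+\gamma w=0$ with $\gamma\in L^1(0,1)$ and work in the ordinary $H^1_0(0,1)$; and you obtain $0\le v\le 1$ by testing with $w^+$ and $v^-$, whereas the paper argues pointwise from $w\in W^{2,1}\subset C^1$. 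Both variants work, and your variational sign argument is arguably cleaner. The computations in (iii) and (iv) coincide with the paper's.

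There are, however, two steps whose stated justification does not hold up. First, strict positivity of $v$: from $v(x_0)=0$ and $(\tilde a v_x)(x_0)=0$ you get $(\tilde a v_x)(x)=\int_{x_0}^x\tilde c v\,ds$, hence $v_x\le 0$ on $(0,x_0)$ and $v_x\ge 0$ on $(x_0,1)$ --- but this monotonicity pattern is perfectly compatible with $v$ descending from $1$ to $0$ and climbing back up to $1$, so it does \emph{not} ``prevent $v$ from returning to $1$ at both endpoints'' and yields no contradiction. The correct conclusion from $v(x_0)=(\tilde a v_x)(x_0)=0$ is that $v\equiv 0$, by uniqueness (Gronwall) for the Cauchy problem for the first-order system $(v,\tilde a v_x)$ with $L^1$ coefficients $1/\tilde a$ and $\tilde c$; this contradicts $v(0)=1$, and is exactly how the paper argues. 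Second, in (ii) you infer $y^{-1}\in W^{1,1}$ by computing $\int_0^1 (y^{-1})'(\eta)\,d\eta=1$ via the substitution $\eta=y(x)$; but that substitution (and the identity $(y^{-1})'(y(x))\,y'(x)=1$ a.e.) presupposes precisely the absolute continuity of $y^{-1}$ that you are trying to establish --- a strictly increasing continuous bijection can have an integrable a.e.-derivative without being absolutely continuous. The paper flags this as a genuine subtlety and resolves it with Zareckii's theorem: a strictly increasing absolutely continuous bijection whose derivative is positive a.e.\ has an absolutely continuous inverse. Since you have already noted that $y'>0$ a.e., that theorem is the missing ingredient; both gaps are repairable, but as written neither step is proved.
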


\begin{proof}
(i) Let $l=\int_0^1 ds/\tilde a(s)$ and $z(x)=l^{-1} \int_0^x ds/\tilde a(s)$. Then $z:[0,1]\to [0,1]$ is a strictly increasing continuous map 
(for $z(x_2)-z(x_1) =l^{-1}\int_{x_1}^{x_2} ds/\tilde a(s) >0$ for $x_1<x_2$). It is a bijection which is absolutely continuous  (i.e.  $z\in W^{1,1}(0,1)$), for 
$1/ \tilde a \in L^1(0,1)$. Moreover, $z'(x)=1/ (l\tilde a(x))$ for a.e. $x\in (0,1)$. It follows from \eqref{B11} and \eqref{B14} that  $a(x)<\infty$ and $\tilde a(x) <\infty$ for a.e. $x\in (0,1)$, so that 
 $z'(x)>0$ for a.e. $x\in (0,1)$. Then we infer from
a theorem due to M. A. Zareckii (see \cite[Ex. 5.8.54 p. 389]{bogachev} or \cite{spa}) that  $z^{-1}$ is absolutely continuous as well (i.e.
$z^{-1}\in W^{1,1}(0,1)$). (Note that for $z:[0,1]\to [0,1]$ a strictly increasing bijection in $W^{1,1}(0,1)$, its inverse $z^{-1}$  
may not belong to  $W^{1,1}(0,1)$, see \cite[Ex. 4.6 p. 287]{gordon} or \cite{spa}.) 
In particular, $z$ satisfies the condition $N$ (Lusin's condition) 
\be\label{lusin}
A\subset [0,1], \ |A|=0 \Rightarrow |z(A)|=0
\ee
($|A|$ standing for the Lebesgue measure of $A$), and the same holds true for $z^{-1}$. 

Introduce the function $w:[0,1]\to \R$ defined by 
\[
w(z):=v(x(z))\qquad \forall z\in [0,1].
\] 
Then $dw/dz = l\tilde a (x)dv/dx$ so that, letting $'=d/dz$ and $\gamma (z) := (l^2 \tilde a\tilde c)(x(z))$, \eqref{E1}-\eqref{E2} becomes
\ba
\label{E11} -w''+\gamma w &=& 0, \quad z\in (0,1)\\
\label{E12} w(0)=w(1)&=&1.
\ea  
Note that $\gamma (z)\ge 0$ for a.e. $z\in (0,1)$ and that $\gamma \in L^1(0,1)$, for 
\[
\int_0^1\gamma (z) dz = l \int _0^1 \tilde c(x(z)) \frac{dx}{dz} dz= l \int _0^1 \tilde c (x) dx <\infty. 
\]
In the last equality, we used the change of variable formula (which is licit, because $z^{-1}\in W^{1,1}(0,1)$ and it satisfies Lusin's condition, see \cite{hajlasz93}). 
Letting $w=u+1$, we define $u$ as the unique solution in $H^1_0 (0,1)$ of the variational problem 
\[
\int_0^1 [ u'\varphi ' + \gamma u\varphi ] dx = -\int _0^1 \gamma \varphi dx \quad \forall \varphi \in H^1_0(0,1).  
\]
Then $w\in W^{2,1}(0,1)\subset C^1([0,1])$. Let us check that 
\be
\label{E15} 0<w(x) \le 1\quad \forall x\in [0,1]. 
\ee
If $\max_{x\in[0,1]} w(x)>1$, we can pick $x_0\in (0,1)$ such that 
\be
\label{E15bis}
w(x_0)=\max_{x\in [0,1] } w(x) >1. 
\ee
Then $w'(x_0)=0$. Let $\delta >0$ denote the greatest positive number such that $x_0+\delta \le 1$ and
\[
w(x)>1 \qquad \forall x\in (x_0,x_0+\delta ).
\] 
It follows that for $x\in [x_0,x_0+\delta ]$ 
\[
w'(x)=\int_{x_0}^x w''(s)ds =\int_{x_0}^x \gamma (s) w(s) ds\geq0
\]
and hence 
\[
w(x)-w(x_0)=\int_{x_0}^x w'(s)ds\ge 0.
\]
In particular, $w(x_0+\delta )\ge w(x_0) >1$, a fact which contradicts the definition of $\delta$. 
Thus  $\max_{x\in[0,1]} w(x)\le 1$. A similar argument shows that 
 $\min_{x\in[0,1]} w(x)\ge 0$. If  $\min_{x\in[0,1]} w(x)=0$, we pick $x_0\in (0,1)$ such that 
 \[
 w(x_0)=\min_{x\in [0,1] } w(x)=0. 
 \]  
 Then $w$ solves the Cauchy problem 
 \begin{eqnarray*}
&& w''(x)=\gamma (x)w(x)\quad \text{ for a.e.  } x\in  (0,1),\\
&&w(x_0)=w'(x_0)=0
 \end{eqnarray*}
 and hence $w\equiv 0$, which contradicts \eqref{E12}. \eqref{E15} is proved. \\
(ii) $y:[0,1]\to [0,1]$ is an increasing continuous map (for $dy/dx=(L a v^2 e^B)^{-1}>0$ a.e. in $(0,1)$). Moreover, 
$y\in W^{1,1}(0,1)$ (using \eqref{B12} and (i)), and also $y^{-1}\in W^{1,1}(0,1)$. (See (i) for the proof of a similar result for $z$.) \\
(iii) To check that $\hat \rho \in L^p(0,1)$ when $1<p<\infty$, we use \eqref{B14}, \eqref{E3}-\eqref{E4} and (i) to get 
\begin{eqnarray*}
\int_0^1 |\hat \rho (y) |^p dy &=& \int _0^1 [L^2 a(x) v^4 (x) e^{2B(x)} \rho (x) ]^p \frac{dy}{dx} dx\\
&=& L^{2p-1} \int_0^1 a^{p-1} \rho ^p v^{4p-2} e^{(2p-1)B} dx\\
&<&\infty .
\end{eqnarray*}
The fact that $\hat \rho \in L^\infty (0,1)$ when \eqref{B14} holds with $p=\infty$ is obvious.
On the other hand, $\hat \rho (y) >0$ for a.e. $y\in (0,1)$, for 
\[
\int_0^1 \chi_{ \{ \hat \rho (y)\le 0\} } (y)dy = \int_0^1 \chi _{ \{ (a\rho )(x)\le 0 \} }(x) \frac{dy}{dx} dx =0. 
\]
(iv) We first derive the PDE satisfied by $u_2$. 
\ba
e^{-B} (av^2 e^B u_{2,x} )_x 
&=& e^{-B} \big( a v^2 e^B (\frac{u_{1,x}}{v} -\frac{u_1}{v^2} v_x ) \big) _x  \nonumber \\
&=& e^{-B} \big(a e^B (vu_{1,x} -v_x u_1 )\big) _x \nonumber \\
&=& e^{-B} \big( v(a e^B u_{1,x})_x - u_1(a e^B v_x)_x \big) \nonumber \\
&=& v\rho u_{1,t}  \nonumber \\
&=& \rho v^2 u_{2,t}\label{E20} 
\ea
(The first equality follows from \eqref{defu2}, the third from basic algebra, the fourth from \eqref{B1}, \eqref{XXX1}-\eqref{E1} and \eqref{defu1},  and the last from \eqref{defu2} again.)    
Since $\partial _y=(dx/dy)\partial _x = La v^2 e^B \partial _x$, \eqref{E20} combined with \eqref{E4} gives \eqref{D1}. \eqref{D4} is obvious. 
It remains to establish \eqref{D2}-\eqref{D3}. We focus on \eqref{D2}, \eqref{D3} being obtained the same way. From the definition of $u_2$ we obtain
\be
au_x=e^{Kt}a(v_xu_2+vu_{2,x})\quad \textrm{ a.e. in } (0,1). 
\label{E3bis}
\ee
Combined with \eqref{B2}, this gives
\[
\alpha _0 u_2(0,t) + \beta _0 ((av_x) (0) u_2(0,t) + (au_{2,x})(0,t))=0. 
\]
On the other hand 
\[ \hat u_y=(dx/dy) u_{2,x}=La(x)v^2(x)e^{B(x)} u_{2,x}\]
and hence $\hat u_y(0,t)=L(au_{2,x})(0,t)$. Then \eqref{D2} follows with 
\[
\hat \alpha _0=\alpha _0+ \beta _0 (av_x)(0),\quad \hat \beta _0= L^{-1} \beta_0.
\]
\end{proof}
\subsection{Null controllability of the control problem \eqref{C1}-\eqref{C4}}
Assume given $p\in (1,\infty ] $, $\rho \in L^p(0,1)$ with $\rho (x)>0$ for a.e. $x\in (0,1)$, and  
$(\alpha _0,\beta _0),(\alpha _1,\beta _1)\in \R ^2\setminus \{ (0,0) \}$. Let $'=d/dx$, and let
\[
L^2_\rho :=\big\{ f:(0,1)\to \R ;  || f ||^2_{L^2_\rho} :=\int_0^1 |f(x)|^2 \rho (x) dx <\infty \big\} . 
\]
\begin{proposition}
\label{prop2}
Let $p,\rho,\alpha _0,\beta _0,\alpha _1$, and $ \beta _1$ be as above.  Then there are a sequence $(e_n)_{n\ge 0}$ in $L^2_\rho$ and a 
sequence $(\lambda _n)_{n\ge 0}$ in $\R$ such that
\begin{enumerate} 
\item[(i)] $(e_n)_{n\ge 0}$ is an orthonormal basis in $L^2_\rho$;
\item[(ii)] For all $n\ge 0$, $e_n\in W^{2,p}(0,1)$ and $e_n$ solves 
\begin{eqnarray}
-e_n'' &=& \lambda _n \rho e_n \quad \textrm{ in } (0,1), \label{F1}\\
\alpha _0 e_n(0)+\beta _0 e_n'(0)&=& 0, \label{F2}\\
\alpha _1e_n(1) +\beta _1e_n'(1)&=& 0.\label{F3}
\end{eqnarray}  
\item[(iii)] The sequence $(\lambda _n)_{n\ge 0}$ is strictly increasing, and for some constant $C>0$
\be
\label{G100}
\lambda _n\ge Cn\quad \textrm{ for } n\gg 1 .
\ee 
\end{enumerate}
\end{proposition}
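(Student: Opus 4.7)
My plan is to cast \eqref{F1}--\eqref{F3} as the eigenvalue problem of a self-adjoint operator on $L^2_\rho$ with compact resolvent obtained by the form method, and to deduce \eqref{G100} from min-max with an ad hoc trial argument.

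Let $V$ be the closed subspace of $H^1(0,1)$ consisting of functions vanishing at $x=0$ when $\beta_0=0$ and at $x=1$ when $\beta_1=0$. Consider on $V$ the symmetric continuous bilinear form
\[
a(u,v) := \int_0^1 u'v'\,dx + \widehat{\alpha}_0\, u(0) v(0) - \widehat{\alpha}_1\, u(1) v(1),
\]
where $\widehat{\alpha}_i := \alpha_i/\beta_i$ if $\beta_i\ne 0$, the corresponding boundary term being dropped otherwise. Starting from $|u(0)|^2 \le 2 u(x)^2 + 2x \|u'\|_{L^2}^2$ and integrating against $\rho$ on $(0,\delta)$, the fact that $\int_0^\delta \rho > 0$ for every $\delta>0$ (since $\rho>0$ a.e.) produces the refined trace inequality $|u(0)|^2 \le \varepsilon \|u'\|_{L^2}^2 + C_\varepsilon \|u\|_{L^2_\rho}^2$, and symmetrically at $x=1$. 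Together with $\rho \in L^p \subset L^1$, this makes the shifted form $a_{C_0}(u,v) := a(u,v) + C_0 \int_0^1 uv\rho\,dx$ continuous and coercive on $V$ with respect to $\|u'\|_{L^2}^2 + \|u\|_{L^2_\rho}^2$ for $C_0$ large enough. Lax--Milgram applied to $a_{C_0}$, combined with the compact injection $V\hookrightarrow L^2_\rho$ (which follows from $V \hookrightarrow C([0,1])$ being compact and $\rho \in L^1$), produces a compact, self-adjoint, positive operator $K$ on $L^2_\rho$. The spectral theorem delivers an orthonormal basis $(e_n)_{n\ge 0}$ of $L^2_\rho$ made of eigenfunctions of $K$ with eigenvalues $\kappa_n>0$ accumulating only at $0$, and $\lambda_n:=\kappa_n^{-1}-C_0$ is the desired eigenvalue sequence. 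The regularity $e_n \in W^{2,p}(0,1)$ and the boundary conditions \eqref{F2}--\eqref{F3} are read off from $e_n''=-\lambda_n \rho e_n \in L^p$ and the variational identity; simplicity of each $\lambda_n$ (whence strict monotonicity) is automatic because the space of solutions of \eqref{F1} satisfying \eqref{F2} is one-dimensional.

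For \eqref{G100} I use the min-max characterization
\[
\lambda_n + C_0 = \min_{\substack{W \subset V \\ \dim W = n+1}}\ \max_{u \in W\setminus\{0\}}\ \frac{a_{C_0}(u,u)}{\|u\|_{L^2_\rho}^2}.
\]
Fix an $(n+1)$-dimensional $W\subset V$, and partition $(0,1)$ into $n$ subintervals $I_j=[x_{j-1},x_j]$ of equal $\rho$-mass $\|\rho\|_{L^1}/n$ (possible because $x\mapsto\int_0^x\rho$ is a continuous bijection $[0,1]\to[0,\|\rho\|_{L^1}]$). A dimension count produces $u\in W\setminus\{0\}$ with $\int_{I_j} u\rho\,dx=0$ for all $j$; since $\rho>0$ a.e.\ and $u$ is continuous, $u$ must change sign on each $I_j$, so each $I_j$ contains some $y_j$ with $u(y_j)=0$. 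Cauchy--Schwarz gives $|u(x)|^2 \le (x_j-x_{j-1}) \int_{I_j}|u'|^2\,ds$ for $x\in I_j$, and multiplying by $\rho$ and summing yields $\|u\|_{L^2_\rho}^2 \le (\|\rho\|_{L^1}/n)\,\|u'\|_{L^2}^2$. Coercivity of $a_{C_0}$ then gives $a_{C_0}(u,u) \ge cn\,\|u\|_{L^2_\rho}^2$ for $n$ large, whence \eqref{G100}.

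The main obstacle is the low regularity of $\rho$: it is only $L^p$, may degenerate or blow up, and $1/\rho$ need not be integrable, so standard trace and Poincar\'e inequalities, or a Liouville-type transformation, are not directly available. The only hypotheses really used are $\rho>0$ a.e.\ (to secure both the refined trace bound and the sign-change in the min-max argument) and $\rho \in L^1$ (to obtain the compact embedding $V \hookrightarrow L^2_\rho$).
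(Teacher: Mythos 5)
Your proof of parts (i)--(ii) follows essentially the same route as the paper: the form method on the subspace $V\subset H^1(0,1)$ encoding the Dirichlet constraints, a shifted coercive form, Lax--Milgram, the compact embedding into $L^2_\rho$, and the spectral theorem for the resulting compact self-adjoint operator. The one difference there is that you obtain the endpoint estimate $|u(0)|^2+|u(1)|^2\le \varepsilon\|u'\|_{L^2}^2+C_\varepsilon\|u\|_{L^2_\rho}^2$ constructively (integrating $|u(0)|^2\le 2u(x)^2+2x\|u'\|_{L^2}^2$ against $\rho$ on $(0,\delta)$ and using $\int_0^\delta\rho>0$), where the paper proves the stronger $L^\infty$ version, Lemma~\ref{lem1}, by a compactness--contradiction argument; both work, and your version has the small merit of an explicit constant. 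One slip to fix: with your sign convention $+\widehat\alpha_0u(0)v(0)-\widehat\alpha_1u(1)v(1)$, integration by parts shows the natural boundary conditions recovered from the variational identity are $\alpha_0e(0)-\beta_0e'(0)=0$ and $\alpha_1e(1)-\beta_1e'(1)=0$, not \eqref{F2}--\eqref{F3}; the correct boundary form is $\frac{\alpha_1}{\beta_1}u(1)v(1)-\frac{\alpha_0}{\beta_0}u(0)v(0)$ as in the paper. This does not affect coercivity or any subsequent step. Where you genuinely diverge is part (iii). The paper proves \eqref{G100} by the Pr\"ufer substitution: it shows $\theta(1,\lambda)\to\infty$ as $\lambda\to\infty$ (the delicate step), identifies the $n$-th eigenvalue by $\theta(1,\lambda_n)=\theta_1+(n-\bar n)\pi$, and integrates the $\theta$-equation to get $n\pi\le 1+\lambda_n\|\rho\|_{L^1}+O(1)$. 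You instead use Courant--Fischer: in any $(n+1)$-dimensional $W\subset V$ you find $u\ne0$ that is $L^2_\rho$-orthogonal to the indicators of $n$ intervals of equal $\rho$-mass, hence vanishes once in each, and the resulting Poincar\'e inequality $\|u\|_{L^2_\rho}^2\le(\|\rho\|_{L^1}/n)\|u'\|_{L^2}^2$ forces $\lambda_n+C_0\ge cn$. This argument is correct (the intervals are nondegenerate because $\int_0^x\rho$ is strictly increasing, and either $u$ changes sign on $I_j$ or vanishes identically there, so a zero exists in each case) and is more elementary and self-contained than the oscillation-theoretic route: it needs no ODE comparison theorem and no limiting argument in $\lambda$. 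What the Pr\"ufer approach buys in exchange is the exact counting of eigenvalues by the winding of $\theta$, which is what one would need for two-sided asymptotics or for the improved bound $\lambda_n\ge Cn^2$ mentioned in the paper's subsequent remark; your min-max argument only delivers the lower bound, but that is all Proposition~\ref{prop2} requires. Your one-line justification of simplicity (the solution space of \eqref{F1} with \eqref{F2} is one-dimensional) is the paper's Wronskian argument in disguise; in both cases one should note that uniqueness for the Cauchy problem with $\rho\in L^p$ is supplied by Carath\'eodory theory.
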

\begin{proof}
Let us consider the elliptic boundary value problem 
\begin{eqnarray}
-u'' +\lambda ^* \rho u &=& \rho f \quad \textrm{ in } (0,1),\label{G12}\\
\alpha _0 u(0)+\beta _0 u'(0)&=& 0,\label{G13}\\
\alpha _1u(1) +\beta _1 u'(1)&=& 0\label{G14}
\end{eqnarray}  
where $\lambda ^*\gg 1$ will be chosen later on. Introduce the symmetric bilinear form 
\[
a(u,v) :=\int_0^1  (u'v'+\lambda ^* \rho uv)dx + a_b(u,v)
\]
where
\[
a_b(u,v) := \left\{ 
\begin{array}{ll}
\frac{\alpha _1}{\beta _1} u(1)v(1) - \frac{\alpha _0}{\beta _0} u(0)v(0)  \ \ &\text{ if } \beta _1\ne 0 \textrm{ and } \beta _0\ne 0,\\
\frac{\alpha _1}{\beta _1} u(1)v(1) \ \ &\text{ if } \beta _1\ne 0 \textrm{ and } \beta _0 = 0,\\
- \frac{\alpha _0}{\beta _0} u(0)v(0)  \ \ &\text{ if } \beta _1 =  0 \textrm{ and } \beta _0\ne 0,\\
0 \ \ &  \text{ if } \beta _1 =  0 \textrm{ and } \beta _0= 0.
\end{array}
\right. 
\]
Let 
\[
H :=\{ u\in H^1(0,1); \ u(0)=0 \textrm{ if } \beta_0 =0, \ u(1)=0 \textrm{ if } \beta _1=0\}
\]
be endowed with the $H^1(0,1)$-norm. Clearly, the form $a$ is continuous on $H\times H$, for $H^1(0,1)\subset C^0([0,1])$ continuously. We claim that the form $a$ is 
{\em  coercive}  if $\lambda ^*$ is large enough. We need the 
\begin{lemma}\label{lem1}
For any $\varepsilon >0$, there exists some number $C_\varepsilon>0$ such that
\be
\label{G1}
||u||_{L^\infty}^2 \le \varepsilon || u' ||^2_{L^2} + C_\varepsilon || u ||^2_{L^2_\rho} \qquad \forall u\in H^1(0,1).
\ee 
\end{lemma}
\noindent
{\em Proof of Lemma \ref{lem1}.} If \eqref{G1} is false, then one can find a number $\varepsilon >0$ and a sequence $(u_n)_{n\ge 1}$ in $H^1(0,1)$ such that 
\be
\label{G2} 1=||u_n||^2_{L^\infty } >\varepsilon ||u_n '||^2_{L^2} +n||u_n||^2_{L^2_\rho}\quad  \forall \ge 1. 
\ee 
Thus $||u_n||^2_{H^1} \le 1+\varepsilon ^{-1}$, and for some subsequence $(u_{n_k})$ and some $u\in H^1(0,1)$ we have 
\be
u_{n_k} \to u \textrm { weakly in } H^1(0,1). 
\ee
Since $H^1(0,1)\subset C^0([0,1])\subset L^2_\rho$ continuously, the first embedding being also compact, we infer that $u_{n_k}\to u$ in both 
$C^0([0,1])$ and $L^2_\rho$. Thus $||u||_{L^\infty}=1$ by \eqref{G2}. But \eqref{G2} yields also $u_n\to 0$ in $L^2_\rho$ and hence  $u=0$, 
contradicting $||u||_{L^\infty}=1$. Lemma \ref{lem1} is proved.\qed

From \eqref{G1}, we infer the existence of some constants $C_1,C_2>0$ such that 
\be
C_1 ||u||^2_{H^1} \le ||u' ||^2_{L^2} + ||u|| ^2_{L^2_\rho}  \le C_2 ||u||^2_{H^1}\quad \forall u\in H^1(0,1). 
\ee
Next, we have for some $C^*>0$
\be
|a_b(u,u)| \le C^*||u||^2_{L^\infty} \le C^* (\varepsilon ||u'||^2_{L^2} + C_\varepsilon ||u||^2_{L^2_\rho} )\le \frac{1}{2} (||u'||^2_{L^2} + \lambda ^* ||u||^2_{L^2_\rho})
\ee
if we pick $0<\varepsilon <(2C^*)^{-1}$ and $\lambda ^* > 2C^*C_\varepsilon$. Then for all $u\in H^1(0,1)$ we have 
\[
a(u,u) \ge \frac{1}{2} (||u'||^2_{L^2} + \lambda ^* ||u||^2_{L^2_\rho} ) \ge C ||u||^2_{H^1},
\]
with $C:= \min( 1,\lambda ^*) C_1/2$, as desired.

Let $f\in L^2_\rho$ be given. The linear form $L(v)=\int_0^1\rho fvdx$ being continuous on  $H$, 
it follows from Lax-Milgram theorem that there exists 
a unique function $u\in H$ such that 
\be
\label{G10}
a(u,v)=L(v) \quad \forall v\in H.
\ee 
Taking any $v\in C_0^\infty (0,1)$ in \eqref{G10}, we  infer that \eqref{G12} holds in the distributional sense. Furthermore $u\in W^{2,1}(0,1)$. Next, multiplying 
each term in  \eqref{G12} by $v\in C^\infty ([0,1])\cap H$, integrating over $(0,1)$ and comparing with \eqref{G10}, we obtain \eqref{G13}-\eqref{G14}. 

The operator $T:f\in L^2_\rho\to u=T(f)\in L^2_\rho$ is continuous, compact, and self-adjoint. It is also positive definite, for 
\[
C ||u||^2_{H^1} \le a(u,u)=(f,u)_{L^2_\rho}\quad \text{ and } \quad u=0\iff f=0.
\]  
By the spectral theorem, there are an orthonormal basis $(e_n)_{n\ge 0}$ in $L^2_\rho $ and a sequence $(\mu_n)_{n\ge 0}$ in $(0,+\infty )$ 
with $\mu _n\searrow 0$ such that $T(e_n)=\mu _n e_n$ for all $n\ge 0$. Thus \eqref{F1}-\eqref{F3}  hold with $\lambda _n = \mu _n^{-1}-\lambda ^*$.   The eigenfunction
$e_n\in W^{2,p}(0,1)$ by \eqref{F1} and the fact that $\rho\in L^p(0,1)$ and  $e_n\in L^\infty(0,1)$.\\ 
(iii) The sequence $(\lambda _n)_{n\ge 0}$ is known to be nondecreasing. It is (strictly) increasing if each eigenvalue $\lambda _n$ is simple, a fact which is easily established: 
if $e$ and $\tilde e$ are two eigenfunctions associated with the same eigenvalue $\lambda_n$, then the Wronskian $W(x):=e(x)\tilde e'(x) - e'(x) \tilde e(x)$ satisfies
$W'(x)=0$ a.e. and $W(0)=0$, and hence  $W\equiv 0$ in $(0,1)$. It follows that $e$ and $\tilde e$ are proportional. 

Let us prove \eqref{G100}. Consider for any $\lambda \ge 1$ the system 
\ba
-e''&=& \lambda \rho e, \label{R1}\\
\alpha _0 e(0) + \beta _0 e'(0) &=& 0, \label{R2} \\
\alpha _1 e(1) + \beta _1 e'(1) &=& 0. \label{R3} 
\ea
Following \cite{BR}, we introduce the Pr\"ufer substitution
\ba
e' &=& r\cos \theta , \label{R4}\\
e &=& r  \sin \theta , \label{R5}
\ea  
 so that 
\ba
r^2&=& e'^2 + e^2, \label{R6} \\
\tan \theta &=& \frac{e}{e'} \cdot \label{R7}
\ea
Then $(r,\theta )$ satisfies 
\ba
\frac{dr}{dx} &=& r(1-\lambda\rho )\cos\theta \sin \theta, \label{R12} \\
\frac{d\theta}{dx} &=& \cos ^2 \theta + \lambda\rho \sin ^2 \theta. \label{R13}
\ea
Conversely, if $(r,\theta)$ satisfies \eqref{R12}-\eqref{R13}, then one readily sees that \eqref{R1} and \eqref{R4} hold.

The condition \eqref{R2} is expressed in terms of $\theta$ as 
\be
\label{R14}
\theta _{\vert x=0}=\theta _0  :=
\left\{ 
\begin{array}{ll}
-\arctan ( \frac{\beta _0}{\alpha _0} ) & \text{ if } \alpha _0\ne 0,\\ 
\frac{\pi}{2} &\text{ if } \alpha _0=0. 
\end{array}
\right. 
\ee
Denote by $\theta (x,\lambda )$ the solution of \eqref{R13} and \eqref{R14}. (Note that $r$ is not present in \eqref{R13}.) 
Introduce
\be
\label{R14bis}
\theta _1 :=
\left\{ 
\begin{array}{ll}
-\arctan ( \frac{\beta _1}{\alpha _1} ) & \text{ if } \alpha _1\ne 0,\\ 
\frac{\pi}{2} &\text{ if } \alpha _1=0. 
\end{array}
\right. 
\ee
Then $(e,\lambda)$ is a pair of eigenfunction/eigenvalue if and only if 
\be
\label{R14ter}
\theta ( 1,\lambda)=\theta _1 \quad \text{ mod } \pi.  
\ee
Since the map $(x,\theta,\lambda ) \to \cos ^2 \theta + \lambda\rho (x) \sin ^2 \theta$ is integrable in $x$ and of class $C^1$ in $(\theta , \lambda )$, 
it follows that the map $(x,\lambda ) \to \theta (x,\lambda )$ is well defined and continuous for $x\in [0,1]$ and $\lambda \ge 1$.  On the other hand, since 
 the map   $\lambda  \to \cos ^2 \theta + \lambda\rho (x) \sin ^2 \theta$ is strictly increasing for a.e. $x$ (provided that $\theta\not\in\pi\Z$), it follows from 
a classical comparison theorem (see e.g. \cite{BR})  that the map $\lambda \to \theta (1,\lambda)$ is strictly increasing. 

Let 
\[
\bar \theta (x):=\lim_{\lambda \to \infty} \theta (x, \lambda ),\qquad x\in [0,1].
\]
We claim that 
\be
\label{R18}
\bar\theta (1) =\infty . 
\ee
If \eqref{R18} fails, then we have for all $x\in [0,1]$ and all $\lambda \ge 1$
\[
\theta _0 \le \theta (x, \lambda) \le \theta (1, \lambda ) \le \bar \theta (1)<\infty,
\]
where we used the fact that the r.h.s. of \eqref{R13} is positive a.e. 
Integrating in \eqref{R13} over $(a,b)$, where $0\leq a<b\leq1$, gives then
\be
\theta (b, \lambda ) - \theta (a, \lambda ) =\int_a^b \cos ^2\theta(x, \lambda)  dx +  \lambda\int_a^b\rho (x) \sin ^2 \theta(x, \lambda) dx\label{eq:3}.
\ee
 An application of the Dominated Convergence Theorem yields
\begin{eqnarray}
\int_a^b \cos ^2 \theta(x, \lambda) dx &\to & \int_a^b\cos ^2\bar \theta(x) dx,\label{eq:1}\\
\int_a^b \rho (x)\sin ^2 \theta(x, \lambda) dx &\to& \int_a^b\rho (x)\sin ^2 \bar \theta(x) dx\label{eq:2}
\end{eqnarray}
as $\lambda \to \infty$.

Letting $\lambda\to\infty$ in \eqref{eq:3}
and using~\eqref{eq:1}-\eqref{eq:2}, we infer
\[
\int_a^b\rho (x) \sin ^2 \bar\theta(x) dx = 0.
\]
The numbers $a$ and $b$ being arbitrary,
this shows that $\bar\theta(x)\in\pi\Z$ for a.e.~$x\in(0,1)$. The function $\bar\theta$ being nondecreasing and bounded,
 it is piecewise constant. Choosing $a<b$ such that $\bar\theta$ is constant on $[a,b]$ and letting $\lambda\to\infty$ in~\eqref{eq:3}, we obtain $0\geq b-a$, which is a contradiction.

Thus \eqref{R18} is established, and we see that for any $n\gg 1$ we can find a unique $\tilde \lambda _n\ge 1$ such that
\[
\theta (1, \tilde\lambda _n) =\theta _1 + n\pi.  
\]
Then $\lambda_n$ and $\tilde \lambda _n$ must agree, up to a translation in the indices, i.e. $\lambda _n=\tilde \lambda _{n-\bar n}$ for some $\bar n\in \Z$.  
Thus we can write 
\[
\theta (1, \lambda _n ) =\theta _1 + (n-\bar n)\pi.\  
\] 
Integrating in \eqref{R13}, we obtain 
\[
\theta _1 +(n-\bar n ) \pi -\theta _0 =\int_0^1 (\cos ^2 \theta + \lambda_n\rho \sin ^ 2\theta ) dx 
\le 1+\lambda_n\int_0^1 \rho  (x) dx. 
\]
 Since $\theta _0, \theta _1\in (-\pi/2, \pi/2]$ and $\int_0^1\rho (x)dx>0$, \eqref{G100} follows. \end{proof}
\begin{remark}
If, in addition, $\alpha _0\beta _0\le 0$ and $\alpha _1\beta _1\ge 0$, then using a modified Pr\"ufer system as in 
\cite{BR,harris} we can actually prove that 
\[
\lambda _n\ge Cn^2 \qquad \text{ for } \  n\gg 1. 
\]
\end{remark}

 We now turn our attention to the generating functions $g_i$ ($i\ge 0$) defined along \eqref{C10}-\eqref{C15}. 
  \begin{proposition}
  \label{prop3} \mbox{}
   \begin{enumerate}
  \item[(i)] $g_0(x)=(\alpha _0^2+\beta _0^2)^{-1} (\beta _0 - \alpha _0 x )$\\
  \item[(ii)] There are some constants $C,R>0$ such that 
 \be
 \label{H1} 
 || g_i ||_{ W^{2,p} (0,1) } \le \frac{C}{ R^i ( i ! )^{2-\frac{1}{p} } } \qquad \forall i\ge 0\cdot
 \ee 
  \end{enumerate}
  \end{proposition}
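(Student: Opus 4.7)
The plan is to handle (i) by direct computation from \eqref{C10}--\eqref{C12}, and (ii) by induction based on a Volterra integral representation of $g_i$, combined with H\"older's inequality and a self-reproducing ansatz.

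For (i), since $g_0''=0$ one can write $g_0(x)=A+Bx$; conditions \eqref{C11}--\eqref{C12} then give the linear system $\alpha_0 A+\beta_0 B=0$, $\beta_0 A-\alpha_0 B=1$ whose determinant equals $-(\alpha_0^2+\beta_0^2)\neq 0$, and Cramer's rule yields the announced formula. In particular $g_0\in W^{2,p}(0,1)$ with an explicit bound in terms of $\alpha_0,\beta_0$.

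For (ii), set $\alpha:=2-1/p$ and let $p':=p/(p-1)$ be the conjugate exponent. A direct computation gives the crucial algebraic identity $\alpha p'=p'+1$, valid for every $p\in(1,\infty]$. Integrating \eqref{C13}--\eqref{C15} twice yields the Volterra formula
\[
g_i(x)=\int_0^x (x-s)\,\rho(s)\,g_{i-1}(s)\,ds,\qquad i\ge 1.
\]
The plan is to prove by induction on $i$ the pointwise bound
\[
|g_i(x)|\le \frac{M\,\kappa^{i}\,x^{\alpha i}}{\Gamma(\alpha i p'+1)^{1/p'}},\qquad x\in[0,1],
\]
for constants $M,\kappa>0$ depending only on $\|g_0\|_\infty$, $\|\rho\|_{L^p}$ and $p$. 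The inductive step applies H\"older on the Volterra formula:
\[
\int_0^x(x-s)\,\rho(s)\,s^{\alpha(i-1)}\,ds\le\|\rho\|_{L^p}\Bigl(\int_0^x(x-s)^{p'}s^{\alpha(i-1)p'}\,ds\Bigr)^{1/p'}.
\]
The inner integral is a Beta integral equal to $x^{1+p'+\alpha(i-1)p'}B(\alpha(i-1)p'+1,p'+1)$; the identity $\alpha p'=p'+1$ is precisely what makes the exponent of $x$ after extracting the $1/p'$-th root collapse to $\alpha i$, so the ansatz reproduces itself. The same identity ensures $\Gamma(\alpha(i-1)p'+p'+2)=\Gamma(\alpha ip'+1)$, so that $B(\alpha(i-1)p'+1,p'+1)=\Gamma(\alpha(i-1)p'+1)\Gamma(p'+1)/\Gamma(\alpha i p'+1)$; the factor $\Gamma(\alpha(i-1)p'+1)^{1/p'}$ then cancels the corresponding denominator of the inductive hypothesis, leaving the one-step constant $\kappa=\|\rho\|_{L^p}\Gamma(p'+1)^{1/p'}$.

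Stirling's formula next converts $\Gamma(\alpha i p'+1)^{1/p'}$ into $(i!)^{\alpha}$ up to factors of the form $r^i$ and polynomials in $i$, yielding $\|g_i\|_\infty\le C/(R^i(i!)^\alpha)$ after possibly shrinking $R$. To promote this to $W^{2,p}(0,1)$ I use $g_i''=\rho\,g_{i-1}$, so that $\|g_i''\|_{L^p}\le\|\rho\|_{L^p}\|g_{i-1}\|_\infty$, together with $g_i'(x)=\int_0^x\rho\,g_{i-1}$, so that $\|g_i'\|_\infty\le\|\rho\|_{L^1}\|g_{i-1}\|_\infty$; both bounds lose only a factor $O(i^\alpha)$ that is absorbed by a further slight adjustment of $R$. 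The main obstacle is to choose the induction hypothesis correctly: a naive pointwise-constant bound on $g_i$ does not close under Hölder, and only the weight $x^{\alpha i}$ combined with the exact identity $\alpha p'=p'+1$ makes the estimate self-reproducing and delivers the Gevrey exponent $\alpha=2-1/p$ prescribed by \eqref{B14}.
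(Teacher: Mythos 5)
Your proof is correct and follows essentially the same route as the paper: a self-reproducing pointwise bound of the form $|g_i(x)|\le C_i\,x^{(2-1/p)i}$ established by induction via H\"older's inequality on the iterated integral (the exponent identity you isolate as $\alpha p'=p'+1$ is exactly what drives the paper's Lemma on the gain $x^{r}\mapsto x^{r+1/q+1}$), then promoted to $W^{2,p}$ through $g_i''=\rho\,g_{i-1}$ and $g_i'(x)=\int_0^x\rho\,g_{i-1}$. The only cosmetic difference is that you package the bookkeeping as a Beta/Gamma identity on the one-step Volterra kernel $(x-s)$ and finish with Stirling, whereas the paper applies H\"older only to the inner integral and keeps the explicit products of denominators, which deliver the factor $(i!)^{2-1/p}$ directly.
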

  \begin{proof}
  (i) is obvious. For (ii), we first notice that $g_i$ may be written as 
  \be
  \label{H2}
  g_i(x)=\int_0^x \big( \int_0^s \rho (\sigma ) g_{i-1}(\sigma ) d\sigma \big) ds. 
  \ee 
  Let $q\in [1,\infty )$ be the conjugate exponent of $p$, i.e. $p^{-1}+q^{-1}=1$. We need the following
  \begin{lemma}
  \label{lem2}
  Let $f\in L^\infty (0,1)$ and $g(x)=\int_0^x \big( \int_0^s \rho (\sigma ) f(\sigma ) d\sigma  \big) ds$. If 
  \be
  \label{H3}
  |f(x) | \le Cx^r \textrm{ for a.e. }\   x\in (0,1) 
  \ee 
  for some constants $C,r\ge 0 $, then 
  \be
  \label{H4}
  |g(x) |  \le C\frac{ ||\rho ||_{L^p} }{q^\frac{1}{q} } \frac{ x^{r+\frac{1}{q} +1} }{ (r+\frac{1}{q} )^{\frac{1}{q}} ( r + \frac{1}{q} +1 )} \quad \forall x\in [0,1].
  \ee
  \end{lemma} 
  \noindent
  {\em Proof of Lemma \ref{lem2}.}  From the H\"older inequality and \eqref{H3}, we have for all $s\in (0,1)$ 
  \begin{eqnarray*}
  |\int_0^s \rho (\sigma ) f(\sigma ) d\sigma  | &\le & ||\rho ||_{ L^p(0,s) } ||f ||_{ L^q (0,s)} \\
  &\le& C\|\rho ||_{L^p(0,1)} \left( \frac{s^{rq+1} }{rq+1}\right)^{\frac{1}{q}}  
  \end{eqnarray*} 
  so that 
  \[
  |g(x)| \le C || \rho ||_{ L^p(0,1) } \frac{ x^{ r+\frac{1}{q} +1}   }{ (rq+1)^\frac{1}{q} (r+\frac{1}{q} +1) }
\quad \forall x\in [0,1]. 
  \]
  \qed
  
 Iterated applications of Lemma \ref{lem2} yield
 \begin{eqnarray*}
 |g_i(x)| &\le& || g_0 ||_{L^\infty}  \left( \frac{||\rho ||_{L^p} }{ q^\frac{1}{q} }\right) ^i \frac{x^{i ( \frac{1}{q} +1)  }}{
 \prod_{j=1}^i \left( \frac{1}{q} + (j-1)(1+\frac{1}{q} )\right) ^\frac{1}{q} \prod_{j=1}^i j(1+\frac{1}{q} ) }\\
 &\le& || g_0 ||_{L^\infty}  \left( \frac{||\rho ||_{L^p} }
 { q^\frac{1}{q} }\right) ^i \frac{ 1}{ \left( \frac{1}{q} ( 1+\frac{1}{q} )^{i-1} (i-1)! \right)^\frac{1}{q} i ! (1+\frac{1}{q} )^i }\\
 &\le& \frac{C}{R^i  i! ^{1+ \frac{1}{q} } }
 \end{eqnarray*}
 if we pick $R  < ||\rho ||_{L^p}^{-1}q^\frac{1}{q}  (1+\frac{1}{q})^{1+\frac{1}{q}}$ and $C\gg 1$. Since $1/q=1-1/p$, we infer that 
 \[
 ||g_i ||_{ L^\infty} \le \frac{C}{ R^i i! ^{2-\frac{1}{p} } }
 \]
 which, combined with \eqref{C13}, yields \eqref{H1}. 
  \end{proof}
 \begin{remark}\mbox{}
 \begin{enumerate}
 \item The power of $i!$ in the computations above is essentially sharp, since
 \[
s^i i!\le \prod_{j=1}^i (r+js) \le s^i (i+1)!
 \]
 for $0\le r\le s$. 
 \item When $p=1$, the estimate $||g_i ||_{L^\infty (0,1)} \le C / (R^i \, i!)$ is not sufficient to ensure the convergence of the series in \eqref{C6} when 
 $f\in G^s([0,T])$ with $1<s<2$. 
 \end{enumerate}
 \end{remark}
  
The fact that we can expand the eigenfunctions in terms of the generating functions is detailed in the following 
\begin{proposition}
\label{prop4}
There is some sequence $(\zeta _n)_{n\ge 0}$ of real numbers such that for all $n\ge 0$ 
\be
\label{H10}
e_n=\zeta _n \sum_{i\ge 0} (-\lambda _n)^i g_i \quad \textrm{ in } W^{2,p}(0,1).
\ee
Furthermore, for some constant $C>0$, we have 
\be
|\zeta _n| \le C (1+|\lambda _n| ^\frac{3}{2} )\quad \forall n\ge 0.
\label{G200}
\ee
\end{proposition}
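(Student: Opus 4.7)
The plan is to define $E(x) := \sum_{i \geq 0} (-\lambda_n)^i g_i(x)$, to show that $E$ is a non-trivial solution of the Cauchy problem consisting of \eqref{F1} and \eqref{F2}, and to invoke one-dimensionality of that solution space to deduce that $e_n$ and $E$ are proportional.

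First I would establish convergence of the series in $W^{2,p}(0,1)$. By Proposition~\ref{prop3}(ii), $\|g_i\|_{W^{2,p}} \leq C/(R^i (i!)^{2-1/p})$; since $2 - 1/p > 1$, the series $\sum_{i\ge 0} |\lambda_n|^i \|g_i\|_{W^{2,p}}$ converges for every fixed $n$, so $E$ is well-defined in $W^{2,p}(0,1) \hookrightarrow C^1([0,1])$. Term-by-term differentiation (legitimate by the $L^p$-convergence of the second derivatives), combined with \eqref{C10} and \eqref{C13}, gives $E'' = \sum_{i \geq 1}(-\lambda_n)^i \rho\, g_{i-1} = -\lambda_n \rho E$. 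Moreover \eqref{C14}--\eqref{C15} imply $(E(0), E'(0)) = (g_0(0), g_0'(0)) = (\alpha_0^2+\beta_0^2)^{-1}(\beta_0, -\alpha_0)$, which satisfies $\alpha_0 E(0) + \beta_0 E'(0) = 0$, i.e.\ the boundary condition \eqref{F2}. The Wronskian argument already used in Proposition~\ref{prop2}(iii) shows that the set of solutions of \eqref{F1}--\eqref{F2} is a one-dimensional subspace of $W^{2,p}(0,1)$; since $E$ is non-trivial (as $(g_0(0), g_0'(0)) \neq (0,0)$) and $e_n$ also lies in this subspace, there is a unique $\zeta_n \in \R$ with $e_n = \zeta_n E$, which proves \eqref{H10}.

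For the bound \eqref{G200} I would evaluate this identity at $x = 0$ and split into two cases. If $\beta_0 \neq 0$, then $\zeta_n = e_n(0)(\alpha_0^2+\beta_0^2)/\beta_0$ and it suffices to bound $\|e_n\|_{L^\infty}$: multiplying \eqref{F1} by $e_n$, integrating by parts and using \eqref{F2}--\eqref{F3} yields $\|e_n'\|_{L^2}^2 \leq \lambda_n + C\|e_n\|_{L^\infty}^2$; applying Lemma~\ref{lem1} with $\varepsilon$ small enough to absorb the $L^\infty$ term, together with $\|e_n\|_{L^2_\rho}=1$, gives $\|e_n\|_{L^\infty} \leq C(1 + |\lambda_n|^{1/2})$, whence $|\zeta_n| \leq C(1 + |\lambda_n|^{1/2})$. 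If instead $\beta_0 = 0$, then $\alpha_0 \neq 0$, $E(0) = 0$ and $E'(0) = -1/\alpha_0$, so $\zeta_n = -\alpha_0 e_n'(0)$ and the task reduces to bounding $|e_n'(0)|$. I would pick $x^* \in (0,1)$ with $|e_n'(x^*)|^2 \leq \|e_n'\|_{L^2}^2$ (which exists because $|e_n'|^2$ cannot strictly exceed its mean everywhere) and integrate \eqref{F1} from $0$ to $x^*$ to write $e_n'(0) = e_n'(x^*) + \lambda_n \int_0^{x^*} \rho e_n$, obtaining $|e_n'(0)| \leq C(1 + |\lambda_n|^{1/2}) + C|\lambda_n|(1 + |\lambda_n|^{1/2}) \leq C(1 + |\lambda_n|^{3/2})$.

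The hard part is precisely the case $\beta_0 = 0$: the initial derivative of the eigenfunction is not directly controlled by the $H^1$-energy and must be recovered from the ODE itself, which costs an extra factor of $|\lambda_n|$ and is what forces the exponent $3/2$ in \eqref{G200}.
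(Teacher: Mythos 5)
Your argument is correct and follows essentially the same route as the paper: convergence of the series in $W^{2,p}(0,1)$ via Proposition \ref{prop3}(ii), identification of $e_n$ with a multiple of $\sum_i(-\lambda_n)^ig_i$ through uniqueness for the Cauchy problem (the paper normalizes via $\zeta_n=\beta_0e_n(0)-\alpha_0e_n'(0)$ rather than quoting one-dimensionality of the solution space, but with $\beta_0\ne 0$ your formula $\zeta_n=e_n(0)(\alpha_0^2+\beta_0^2)/\beta_0$ coincides with it), and the estimate \eqref{G200} obtained from the energy bound $\|e_n\|_{H^1}\le C(1+|\lambda_n|^{1/2})$ plus one use of the equation to recover $e_n'(0)$, which costs the extra factor $|\lambda_n|$. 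The paper packages this last step as $|\zeta_n|\le C\|e_n\|_{W^{2,p}}$ with $\|e_n''\|_{L^p}\le C|\lambda_n|\,\|\rho\|_{L^p}\|e_n\|_{H^1}$ instead of your case split and integration from $0$ to $x^*$, but the mechanism and the exponent $3/2$ are the same.
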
  
\begin{proof}
From \eqref{H1}, we infer that the series in \eqref{H10} is absolutely  convergent, hence convergent, in $W^{2,p}(0,1)$. 
Let $\tilde e : =\zeta_n\sum_{i\ge 0} (-\lambda _n)^ig_i$, where $\zeta_n\in \R$. Then 
\[ \tilde e ''=\zeta _n\sum_{i\ge 1} (-\lambda _n)^i\rho g_{i-1} =-\lambda _n \rho \tilde e \textrm{ in } L^p(0,1),\]
where we used \eqref{C10} and \eqref{C13}. \eqref{C11} and \eqref{C14}-\eqref{C15} yield 
\[
\alpha _0 \tilde e (0) + \beta _0 \tilde e'(0) =0.
\]
On the other hand, using \eqref{C12} and \eqref{C14}-\eqref{C15}, we obtain 
\[
\beta _0 \tilde e (0) -\alpha _0 \tilde e'(0)= \zeta _n \big( \beta _0 g_0(0)  - \alpha _0 g_0' (0) \big) =\zeta _n . 
\]
Hence, if we pick 
\be
\zeta _n := \beta _0 e_n(0) - \alpha _0e_n' (0),
\label{K01}
\ee
 we have that $E:=e_n - \tilde e$ satisfies
\[
\alpha _0 E(0)+\beta _0 E'(0)=\beta _0 E(0) -\alpha _0 E'(0) =0
\] 
and hence $E(0)=E'(0)=0$ which, when combined with $-E'' = \lambda _n \rho E$, yields $E\equiv 0$, i.e. $e_n =\tilde e $.
Thus \eqref{H10} holds with $\zeta _n$ as in \eqref{K01}. To estimate $\zeta _n$, we remind that $e_n$ satisfies $T(e_n)=\mu _ne_n$, and hence
\[
\mu_n a(e_n,e_n) = \int_0^1 \rho \, e_n^2 dx =1.
\]
Since $a(e_n,e_n)\ge C ||e_n||^2_{H^1}$, we infer that $||e_n|| ^2_{H^1} \le C\mu_n^{-1}$, and hence 
\[
|e_n(0)|  + |e_n(1) | \le C|| e_n ||_{ H^1} \le C (1+|\lambda _n|^\frac{1}{2}). 
\]
On the other hand,  \eqref{F1} yields
\[
|| e_n'' ||_{L^p}\le C| \lambda _n|\,  ||\rho ||_{L^p} ||e_n||_{H^1} \le C ( 1+ | \lambda _n |^\frac{3}{2}).
\] 
Thus 
\[
|\zeta _n| \le C || e_n ||_{ W^{2,p} } \le C (1 + | \lambda _n |^\frac{3}{2} ). 
\]
\end{proof}
Since $p>1$, for any $s\in (1, 2-\frac{1}{p} )$ and any $0<\tau <T$, one may pick a function $\varphi \in G^s([0,2T])$ such that 
\[
\varphi (t) =\left\{ 
\begin{array}{ll}
1&\textrm { if } t \le \tau ,\\
0 &\textrm{ if } t\ge T. 
\end{array}
\right. 
\] 
We are in a position to prove the null controllability of \eqref{C1}-\eqref{C4}. Let $u_0\in L^2_\rho$. 
Since $(e_n)_{n\ge 0}$ is an orthonormal basis in $L^2_\rho$, we can write 
\be
\label{K5} 
u_0=\sum_{n\ge 0} c_n \, e_n\quad \textrm{ in } L^2_\rho
\ee
with $\sum_{n\ge 0} |c_n|^2 <\infty$. 
Let 
\be
\label{K6}
y(t) : =\varphi (t) \sum_{n\ge 0} c_n\zeta _n e^{-\lambda _nt}\qquad\text{ for }  t\in [\tau , T]
\ee
and 
\be
\label{K7} 
u(x,t) =
\left\{  
\begin{array}{ll} 
\sum_{n\ge 0} c_n e^{-\lambda _n t} e_n(x) \quad &\textrm{ if } 0\le t\le \tau,\\
\sum_{i\ge 0} y^{ (i) } (t) g_i(x) \quad &\textrm{ if } \tau  <  t\le T.  
\end{array}
\right.
\ee
The main result in this section is the following
\begin{theorem}
\label{thm3} Let $p\in (1,\infty ]$, $\rho \in L^p(0,1)$ with $\rho (x) >0 $ for a.e. $x\in (0,1)$, $T>0$, $\tau \in (0,T)$, and $(\alpha _0,\beta _0),
(\alpha _1,\beta _1) \in \R ^2 \setminus \{ (0 , 0 ) \} $. 
Let $u_0\in L^2_\rho$ be decomposed as in \eqref{K5}, let $s\in (1, 2-1/p)$,  and let $y$ be as in \eqref{K6}. Then $y\in G^s([\tau , T])$, and the control 
\be
\label{K8}
h(t) =
\left\{  
\begin{array}{ll} 
0 \quad &\textrm{ if } 0 \le t\le \tau,\\
\sum_{i\ge 0} y^{ (i) } (t) ( \alpha _1 g_i (1) + \beta _1 g_i ' (1) )  \quad &\textrm{ if } \tau <  t\le T.  
\end{array}
\right.
\ee  
is such that the solution $u$ of \eqref{C1}-\eqref{C4} satisfies $u(.,T)=0$. 
Moreover $ u$ is given by \eqref{K7}, $h\in G^s([0,T])$, and  
$u\in C([0,T],L^2_\rho )\cap G^s ( [\varepsilon ,T], W^{2,p}(0,1) )$ for all $0<\varepsilon \le T$.   
\end{theorem}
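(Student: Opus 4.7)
The strategy is to verify each ingredient of the flatness construction: (a) the heat-type series on $[0,\tau]$ is a classical solution with initial data $u_0$, (b) the flat output $y$ belongs to $G^s([\tau,T])$, (c) the generating-function series on $[\tau,T]$ converges in $W^{2,p}$ and solves the PDE with control $h$, (d) the two pieces match at $t=\tau$ thanks to the expansion \eqref{H10}, and (e) $u(\cdot,T)=0$ because $\varphi$ is flat at $T$.

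\emph{Step 1 (the piece on $[0,\tau]$).} Using the orthonormal basis $(e_n)$ in $L^2_\rho$ and the fact that $\lambda _n\to +\infty$ (Proposition \ref{prop2}(iii)), the series $\sum_n c_ne^{-\lambda _n t}e_n$ converges in $C([0,T],L^2_\rho)$ and, for $t>0$, in $W^{2,p}(0,1)$; it is the standard semigroup solution to \eqref{C1}--\eqref{C3} with $h\equiv 0$ and $u(\cdot,0)=u_0$.

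\emph{Step 2 (Gevrey regularity of $y$).} For $t\ge \tau/2$, apply Cauchy--Schwarz to bound
\[
\Big|\tfrac{d^k}{dt^k}\sum_n c_n\zeta_n e^{-\lambda _n t}\Big|^2\le \Big(\sum_n|c_n|^2\Big)\sum_n|\zeta_n|^2\lambda _n^{2k}e^{-\lambda _n\tau}.
\]
Using $|\zeta_n|\le C(1+\lambda _n^{3/2})$ (Proposition \ref{prop4}) and $\lambda _n\ge Cn$ (Proposition \ref{prop2}(iii)), the sum in $n$ is controlled by $\sup_{\lambda \ge 0}\lambda ^{2k+3}e^{-\lambda \tau/2}\lesssim (k!)^2(2/\tau)^{2k+3}$ (Stirling). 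Hence $f(t):=\sum c_n\zeta_n e^{-\lambda _n t}$ is of Gevrey class $1$ on $[\tau/2,T]$. Since $\varphi\in G^s([0,2T])$ and $s>1$, the Leibniz formula yields $y=\varphi f\in G^s([\tau,T])$ with an estimate $|y^{(k)}(t)|\le M k!^s/r^k$.

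\emph{Step 3 (convergence on $[\tau,T]$).} By Proposition \ref{prop3}, $\|g_i\|_{W^{2,p}}\le C R^{-i}(i!)^{-(2-1/p)}$. Combined with Step 2 and the hypothesis $s<2-1/p$, the general term of \eqref{K7} (second line) is bounded in $W^{2,p}$ by $MC(rR)^{-i}(i!)^{s-(2-1/p)}$, which is summable. The same bound with an extra $k!^s/r^k$ factor (differentiating $y$ up to order $k$) yields $u\in G^s([\tau,T],W^{2,p}(0,1))$, and an analogous bound for $h$ gives $h\in G^s([\tau,T])$. Since $\varphi^{(k)}(\tau)=0$ for $k\ge 1$ and $\varphi^{(k)}(T)=0$ for all $k\ge 0$, the extension by $0$ on $[0,\tau]$ produces $h\in G^s([0,T])$ and $y^{(i)}(T)=0$ for every $i\ge 0$, so $u(\cdot,T)=0$.

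\emph{Step 4 (PDE, boundary conditions, and matching).} Differentiating termwise (justified by the $W^{2,p}$-convergence), the relations $g_0''=0$ and $g_i''=\rho g_{i-1}$ for $i\ge 1$ give $u_{xx}=\sum_{i\ge 1}y^{(i)}\rho g_{i-1}=\rho u_t$; the boundary condition at $x=0$ holds since each $g_i$ satisfies \eqref{C11} or \eqref{C14}, and at $x=1$ it is precisely the definition \eqref{K8} of $h$. For the matching at $t=\tau$, using $\varphi\equiv 1$ near $\tau$ we have $y^{(i)}(\tau)=\sum_n c_n\zeta_n(-\lambda _n)^i e^{-\lambda _n\tau}$, so Fubini (justified by absolute convergence in $W^{2,p}$ via Propositions \ref{prop3} and \ref{prop4}) and the expansion \eqref{H10} yield
\[
\sum_{i\ge 0}y^{(i)}(\tau)g_i(x)=\sum_{n\ge 0}c_n e^{-\lambda _n\tau}\,\zeta_n\sum_{i\ge 0}(-\lambda _n)^i g_i(x)=\sum_{n\ge 0}c_n e^{-\lambda _n\tau}e_n(x),
\]
i.e.\ the two expressions of $u$ coincide at $t=\tau$. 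This provides continuity in $L^2_\rho$ at $\tau$ and, combined with Step 1, yields $u\in C([0,T],L^2_\rho)$.

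The main obstacle is Step 2: one must convert the $L^2_\rho$-summability of $(c_n)$ and the polynomial growth of $\zeta_n$ into analytic (Gevrey-$1$) bounds for $f$; everything else is then a bookkeeping of the exponents $s$, $2-1/p$ already fixed in the statement.
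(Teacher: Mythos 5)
Your proposal follows the paper's strategy almost step for step: the same two-piece ansatz, the same use of Propositions \ref{prop2}--\ref{prop4}, the same Fubini argument at $t=\tau$, and the same exponent bookkeeping $s<2-1/p$ via $(i+k)!\le 2^{i+k}i!\,k!$. The one methodological difference is in Step 2: the paper observes that $z\mapsto\sum_n c_n\zeta_n e^{-\lambda_n z}$ is holomorphic on the right half-plane (so $f$ is real-analytic, hence $G^1$), whereas you derive the Gevrey-$1$ bound directly from Cauchy--Schwarz and $\sup_{\lambda\ge 0}\lambda^{2k+3}e^{-\lambda\tau/2}\lesssim (k!)^2(2/\tau)^{2k+3}$. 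Both are valid; your version is more quantitative and self-contained, the paper's is shorter.

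There is, however, one genuine gap. The theorem asserts $u\in G^s([\varepsilon,T],W^{2,p}(0,1))$ for \emph{every} $0<\varepsilon\le T$, in particular for $\varepsilon<\tau$, so you must show that the concatenation of the two series is Gevrey across the junction $t=\tau$, not merely continuous there. Your Step 4 only checks that the two expressions of $u$ agree at $t=\tau$ and deduces continuity in $L^2_\rho$; that does not by itself exclude a jump in $\partial_t^j u$ at $\tau$ for $j\ge 1$. The paper closes this by noting that both pieces satisfy $\partial_t u=\rho^{-1}\partial_x^2 u$, so
\[
\partial_t^j\bar u(\cdot,\tau^+)=(\rho^{-1}\partial_x^2)^j\bar u(\cdot,\tau^+)=(\rho^{-1}\partial_x^2)^j\bar u(\cdot,\tau^-)=\partial_t^j\bar u(\cdot,\tau^-),
\]
i.e.\ equality of the functions at $\tau$ propagates to all time derivatives. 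Relatedly, to run this argument and to justify the claimed regularity on $[\varepsilon,\tau]$ you need the first piece to lie in $G^1([\varepsilon,\tau],W^{2,p}(0,1))$ (not just to converge in $W^{2,p}$ for each fixed $t>0$ as your Step 1 states); this follows from the estimate $\|e_n\|_{W^{2,p}}\le C(1+|\lambda_n|^{3/2})$ together with $\lambda_n\ge Cn$ and $\lambda^k e^{-\lambda\varepsilon}\le k!\,\varepsilon^{-k}$, exactly in the spirit of your Step 2 but for the vector-valued series. With these two additions your argument is complete and coincides with the paper's.
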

\begin{proof}
Let $\C _+:=\{ z=t+ir;\  t>0, \  r\in \R \}$. We notice that the map $z\to \sum_{n\ge 0} c_n\zeta _n e^{-\lambda _n z}$ is analytic  in $ \C_+$. Indeed, by \eqref{G100} and \eqref{G200}, 
the series is clearly uniformly convergent
on any compact set in $\C_+$. It follows that the map $t\to \sum_{n\ge 0} c_n\zeta _n e^{-\lambda _n t}$ is (real) analytic in $(0,\infty)$, hence in $G^1([\tau,T])\subset G^s([\tau,T])$. Thus 
$y\in G^s([\tau ,T])$ by a classical result (see e.g. \cite[Theorem 19.7]{rudin}). 

Let $\bar u$ denote the function defined in the r.h.s. of \eqref{K7}. We first prove that $\bar u \in G^1( [\varepsilon, \tau ],$ $  W^{2,p}(0,1))$ for 
all $\varepsilon \in (0,\tau )$. We have for $k\in \N$ and $\varepsilon \le t\le \tau$, 
\begin{eqnarray*}
||\partial _t ^k(c_ne^{-\lambda _n t} e_n)||_{W^{2,p}} 
&=& |c_n | \, | \lambda _n |^k e^{-\lambda _n t} ||e_n||_{ W^{2,p} } \\
&\le & C |c_n| (1+ | \lambda _n| ^{k+\frac{3}{2}} ) e^{- | \lambda _n|\varepsilon } \\
&\le & C\frac{|c_n|}{n+1} (1+ | \lambda _n| ^{k+3} )e^{- | \lambda _n|\varepsilon } \\
&\le &  C\frac{ |c_n| }{n+1} (1 + \varepsilon ^{-k-3} (k+3)!),
\end{eqnarray*}
where we used \eqref{G100} and $x^k/k! \le e^x$ for $x>0$ and $k\in \N$. Thus, applying Cauchy-Schwarz inequality, we obtain 
for $k\in \N$, $\varepsilon \le t\le \tau$ and some $C,\delta >0$
\[
||\partial _t ^k u ||_{W^{2,p}} \le \sum_{n\ge 0} ||\partial _t ^k (c_n e^{-\lambda _n t}  e_n)||_{W^{2,p}} \le \frac{C}{\delta ^k} k!
\]
which gives that $\bar u \in G^1 ( [\varepsilon , \tau ] ,W^{2,p} (0,1) ) $. It is clear that $\bar u \in C( [ 0,\tau ], L^2_\rho )$. Let us check that 
$\bar u(x,\tau ^- )=\bar u(x, \tau ^+)$. We have that for all $x\in [0,1]$
\ba
\bar u(x,\tau ^-)&=&\sum_{n\ge 0} c_n e^{-\lambda _n \tau } e_n (x) \nonumber \\ 
&=& \sum_{n\ge 0} c_n e^{-\lambda _n \tau } \zeta _n \sum_{i\ge 0} (-\lambda _n)^i g_i(x) \label{K21} \\
&=& \sum_{i \ge 0} \big( \sum_{n\ge 0} c_n \zeta _n e^{-\lambda _n \tau } (-\lambda _n )^i \big) g_i(x) \label{K22} \\
&=& \sum_{i\ge 0} y^{ (i) } (\tau ) g_i(x) \label{K23} \\
&=& \bar u (x,\tau ^+).    \nonumber
\ea
 For \eqref{K21} we used \eqref{H10}. For \eqref{K22}, we used Fubini's theorem for series, which is licit for 
 \begin{eqnarray*}
 \sum_{i,n\ge 0} |c_n\zeta _n e^{-\lambda _n \tau} \lambda _n ^i g_i(x)| 
 &\le& C\sum_{i,n\ge 0} \frac{|c_n|}{n+1} (1+ |\lambda _n | ^{i+3 } ) \frac{e^{- |\lambda _n |\tau} }{ R^i i! ^{ 2-\frac{1}{p} } } \\
&\le &  C\sum_{i,n\ge 0} \frac{|c_n|}{n+1} (1+ \tau ^{-i-3} (i+3) ! ) \frac{ 1 }{ R^i i! ^{ 2-\frac{1}{p} } } \\
 &\le &  C( \sum_{n\ge 0} \frac{|c_n|}{n+1}) ( \sum_{i\ge 0} \frac{1+ \tau ^{-i-3} (i+3) ! }{ R^i i! ^{ 2-\frac{1}{p} } }) \\
 &<&\infty . 
 \end{eqnarray*}   
 Finally for \eqref{K23}, we just used the fact that $\varphi (\tau )=1$ and $\varphi ^{ (i) } (\tau )=0$ for $i\ge 1$. 
 It remains to prove that $\bar u\in G^s( [\tau , T], W^{2,p} (0,1) )$. Since $y\in G^s([\tau , T] )$, there are some constants 
 $C,\rho >0$ such that $ | y^{ (i) } (t)|\le C (i !)^s/\rho ^i$. It follows that for $t\in [\tau, T]$
 \ba
 \sum_{i\ge 0} ||\partial _t^j \, [y^{(i)} (t) g_i  ] \, ||_{W^{2,p}} & = &
 \sum_{i\ge 0} ||y^{ ( i+j ) } (t) g_i  ||_{W^{2,p} } \nonumber \\
 &\le & C\sum_{i\ge 0 }\frac{ (i+j) !^s  }{\rho ^{i+j} } \frac{1}{R^i i ! ^{2-\frac{1}{p} }} \nonumber\\
 &\le & C(\frac{2^s}{\rho })^j \left( \sum_{i\ge 0} ( \frac{2^s}{\rho R} )^i   \frac{1}{i ! ^{2-\frac{1}{p}  -s }}  \right) j ! ^s \label{K30}
 \ea
    where we used $(i+j)! \le 2^{i+j} i! j!$. Note that the series converges in \eqref{K30}, since $s<2-\frac{1}{p}$.
    Thus $\bar u\in G^s([\tau , T] , W^{2,p}(0,1))$. It is clear that \eqref{C1} is satisfied by $\bar u$ in the distributional sense 
    in  $(0,1)\times (0,\tau)$ and in $(0,1)\times (\tau , T)$. In particular
    \[
    \partial _t ^j {\bar u} (x, \tau ^+) =(\rho ^{-1} \partial _x ^2 ) ^j {\bar u} (x, \tau ^+)
    =(\rho ^{-1} \partial _x ^2)^j {\bar u} (x,\tau ^-) =\partial _t ^j {\bar u} (x, \tau ^-) , 
    \]  
    for the two series in \eqref{K7} coincide at $t=\tau$, hence so do their space derivatives. This shows that $\bar u\in G^s( [\varepsilon, T],W^{2,p}(0,1))$ for all
    $\varepsilon \in (0,\tau )$, and that \eqref{C1} holds  for $\bar u$ in  $(0,1)\times (0,T)$. 
    
    The function $h$ defined in \eqref{K8} satisfies \eqref{C3} (with $u$ replaced by $\bar u$), and hence $h\in G^s( [0,T] )$ (for $\bar u\in G^s ([\varepsilon , T],$  $W^{2,p}(0,1) ) $ and 
    $W^{2,p}(0,1)\subset C^1([0,1])$). \eqref{C2} and \eqref{C4} are clearly satisfied by $\bar u$, and hence the solution $u$ of \eqref{C1}-\eqref{C4} is $\bar u$. 
    Finally $u(.,T)=0$, for $y^{(i)} (T)=0$ for all $i\ge 0$.  
\end{proof}
\subsection{End of the proof of Theorem \ref{thm1}}
Let $a,b,c,\rho ,K,p,\alpha _0,\beta _0,\alpha _1,\beta _1,T$, and $\tau$ be as in the statement of Theorem \ref{thm1}. Pick any 
$u_0\in L^1_\rho$ and any $s\in (1,2- 1/p)$. Let $u$ denote the solution of 
\eqref{B1}-\eqref{B4} for a given $h\in G^s([0,T])$. Define $v,y,\hat \rho $, and $\hat u(y,t)$ as in Section \ref{section21}.   Then $\hat u$ 
solves    \eqref{D1}-\eqref{D4} 
 with initial state $\hat u_0(y(x))=u_0(x)/v(x)$.
It may occur that $\hat u_0\not\in L^2_{\hat\rho}$. However, $\hat u_0\in L^1_{\hat \rho }$, for 
 \[
 \int_0^1 |\hat u_0(y)| \hat \rho (y) dy = \int_0^1 |\hat u_0 (y(x)) |\hat \rho (y(x)) | \frac{dy}{dx} | dx = L \int_0^1 |u_0(x)| v(x) e^{B(x)} \rho (x) dx <\infty. 
 \] 
From the proof of Lemma \ref{lem1}, we know that the bilinear form $a(u,v)$ is a scalar product in $H$ whose induced Hilbertian norm is equivalent 
to the usual $H^1$-norm, so that $H$ can be viewed as a Hilbert space for this scalar product. Then it is easy to see that 
\begin{enumerate}
\item[(i)] $(\sqrt{\mu _n} e_n)_{n\ge 0}$ is an orthonormal basis in $H$; \\
\item[(ii)] If, for $a\in \R$, ${\mathcal H} ^a$ denotes the completion of $\text{Span}(e_n; \ n\ge 0)$  for the norm 
\[
||\sum_{n\ge 0} c_ne_n||_a := \left( \sum _{n\ge 0} \mu _n^{-a} |c_n|^2 \right) ^{\frac{1}{2}}, 
\] 
then ${\mathcal H}^0=L^2_{\hat \rho} $ and ${\mathcal H}^1= H $;
\item[(iii)] Identifying $L^2_{\hat\rho}$ with its dual, we obtain the diagram
\[
{\mathcal H}^1=H \subset L^2_{\hat\rho} = (L^2_{\hat\rho} ) ' \subset  H' = {\mathcal H}^{-1}.
\]
\end{enumerate} 
See e.g. \cite[pp. 7-17]{komornik} for details. 
 Since for any $w\in H\subset L^\infty (0,1)$,  
  \[
  \int_0^1 |\hat u_0(y) w(y)| \hat \rho (y) dy \le || w ||_{L^\infty} \int_0^1 |\hat u_0(y)| \hat \rho (y) dy \le C || w ||_H \int_0^1 |u_0(x)| \rho (x) dx, 
  \]
  we infer that $\hat u_0\in H'$. Setting $c_n:=\int_0^1 \hat u_0(y) e_n(y) \hat \rho (y) dy$ for $n\ge 0$, the series $\sum_{n=0}^\infty c_n e^{-\lambda _n t }e_n$ 
  belongs to $C([0,T], H')\cap C((0,\tau ], L^2_\rho )$ and it takes the value $\hat u_0$ at $t=0$. The solution $\hat u$ defined in   \eqref{K7}  with $y$ as in \eqref{K6} 
  solves \eqref{D1}-\eqref{D4} with the control input $\hat h(t)$ defined in \eqref{K8}.  Then the pair
  \begin{eqnarray}
  u(x,t) &:=& e^{Kt} v(x) \hat u( y(x) , t), \\
  h(t) &:=& e^{Kt} \hat h (t)
  \end{eqnarray}
  satisfies \eqref{B1}-\eqref{B4} and $u(.,T)=0$. Pick any $\varepsilon \in (0,T)$. Since $v,y\in W^{1,1} (0,1)$, $\hat u \in G^s ([\varepsilon , T], W^{2,p}(0,1))$,  and
  $\hat h \in G^s ( [ 0,T] )$, we have that 
  \[
  u\in G^s ([ \varepsilon , T], W^{1,1}(0,1) ), \quad h\in G^s( [0,T] ).
  \]
  Finally, since by \eqref{E3} and \eqref{E3bis} we have
  \[
  \tilde a u_x = e^{Kt} \big(  (\tilde a v_x) \hat u (y(x),t) + (Lv) ^{-1} \hat u_y (y(x) , t) \big) 
  \]
  and since $(Lv)^{-1},\tilde a v_x\in W^{1,1} (0,1)$ and  $\hat u \in G^s ( [ \varepsilon , T], W^{2,p}(0,1))$, it follows that 
  \[
  \tilde a u_x, au_x \in G^s ( [ \varepsilon , T ] , C^0 ( [0,1] ))
  \]
  and that \eqref{B2}-\eqref{B3} are satisfied. The proof of Theorem \ref{thm1} is complete.\qed
 \begin{remark}
 Since the map $x\to y(x)$  is absolutely continuous and {\em strictly increasing} on $[0,1]$, and the map
$y\to \hat u_y(y,t)$ is absolutely continuous on $[0,1]$ for all $t\in (0,T]$, we infer that $x\to \hat u_y (y(x),t)$ is absolutely continuous on $[0,1]$
for all $t\in (0,T]$. (See \cite[Ex. 5.8.59 p. 391]{bogachev}.) Thus $a u_x(.,t)\in W^{1,1}(0,1)$ for all $t\in (0,T]$. 
 \end{remark}
  
\section{Proof of Theorem \ref{thm2}}
We shall show that the first step in the proof of Theorem \ref{thm1} (see Section \ref{section21}) can be slightly modified to reduce  
\eqref{B41}-\eqref{B44} to the canonical form \eqref{C1}-\eqref{C4}. Next, the conclusion of Theorem \ref{thm2} will follow from 
Theorem \ref{thm3}.  We distinguish two cases: (i) $0\le \mu <1/4$ (subcritical case) and (ii) $\mu =1/4$ (critical case). \\
(i) Assume that $0\le \mu <1/4$. We relax \eqref{E1}-\eqref{E2} to the problem 
\begin{eqnarray}
v_{xx} + \frac{\mu }{x^2} v = 0,&& x\in (0,1), \label{X1} \\
v(x)>0,&& x\in (0,1), \label{X2}\\
v^{-2}\in L^1(0,1).&& \label{X3}
\end{eqnarray}
The general solution of \eqref{X1} is found to be 
\[
v(x)= C_1 x^{r_1} + C_2 x^{r_2} 
\]
where $C_1,C_2\in\R$ are arbitrary constants, and $r_1,r_2$ denote the roots of the equation $r^2-r+\mu=0$, namely
\[
r_1=\frac{1-\sqrt{1-4\mu }}{2} \in (0,\frac{1}{2}), \quad r_2=\frac{ 1 +  \sqrt{1-4\mu }}{2} \in (\frac{1}{2}, \infty ). 
\]
Then $v(x):=x^{r_1}$ satisfies \eqref{X1}-\eqref{X3}.

From \eqref{B41}, we have that $\tilde a= a\equiv 1$, $B\equiv 0$. We set $u_1:=u$, 
\[
u_2(x,t): =\frac{u(x,t) }{ v(x) }, \quad L:=\int_0^1 v^{-2}(s)ds <\infty, \quad y(x):= L^{-1} \int_0^x v^{-2}(s) ds, 
\]
and 
\[
\hat u(y,t) := u_2 (x,t), \quad \hat \rho (y) :=L^2 v^4(x). 
\]
Again, $y:[0,1]\to [0,1]$ is an increasing bijection with $y,y^{-1}\in W^{1,1}(0,1)$, and $\hat u$ satisfies
\begin{eqnarray}
\hat u_{yy} -\hat \rho (y) \hat u _t&=&0, \quad y\in (0,1),\ t\in (0,T), \label{V1}\\ 
\hat u(0,t)&=&0,\quad t\in (0,T),\label{V2}\\ 
(\alpha _1 + \beta _1 r_1) \hat u(1,t) + \frac{\beta_1}{L} \hat u_y (1,t) &=& h(t),\quad t\in (0,T), \label{V3}\\
\hat u_0(y,0)&=&\hat u_0(y) := \frac{u_0(x)}{v(x)}, \quad y\in (0,1) \cdot \label{V4} 
\end{eqnarray} 
 Note that $\hat u_0\in L^2_{\hat \rho}$, for
 \[
 \int_0^1 |\hat u_0(y)|^2 \hat \rho (y) dy = L\int_0^1|u_0(x)|^2 dx <\infty.
 \] 
 On the other hand $\hat \rho \in L^\infty (0,1)$. 
 By Theorem \ref{thm3}, there is some $h\in G^s( [0,T] )$ such that the solution $\hat u$ of \eqref{V1}-\eqref{V4} satisfies
 $\hat u(.,T)=0$. Furthermore
  \be
  \hat u\in G^s([\varepsilon , T],W^{2,\infty} (0,1)). \label{V10}
  \ee
 The corresponding trajectory $u$ satisfies \eqref{B41}-\eqref{B44}  and $u(.,T)\equiv 0$. 
 Finally,  from the expressions 
 \begin{eqnarray*}
 u &=& v u_2 = v \hat u (y(x), t) \\ 
 u_x&=&  v_x \hat u (y (x), t)  + v\hat u_y (y(x),t) \frac{dy}{dx},
 \end{eqnarray*}
 \eqref{V10}, and the explicit form of $v$, we readily see that $u \in G^s([\varepsilon , T] ,W^{1,1}(0,1))$  and 
 $ x^r u_x\in G^s ( [\varepsilon , T] , C^0 ( [0,1] ) )$  for
 $r>1-r_1=(1+\sqrt{1-4\mu })/2$ and $\varepsilon \in (0,1)$. \\
(ii) Assume now that $\mu =1/4$. Assume first that $\beta _1=0$. We notice that the general solution of \eqref{X1} takes the form 
 \[
 v(x)=C_1 \sqrt{x}\ln x + C_2 \sqrt{x}. 
 \]
 Picking $v(x):=-\sqrt{x}\ln x$, we see that \eqref{X1}-\eqref{X3} are satisfied. Performing the same change of variables as in (i) (but with the new expression of $v$) and
 applying again Theorem \ref{thm3}, we infer the existence of  $h\in G^s( [0,T] )$ such that the solution $\hat u$ of \eqref{V1}-\eqref{V4} satisfies
 $\hat u(.,T)=0$. The corresponding trajectory $u$ satisfies \eqref{B41}-\eqref{B44}  and $u(.,T) = 0$. 
 Furthermore, $u\in G^s ([\varepsilon, T],W^{1,1}(0,1))\cap C^\infty ([\varepsilon,1]\times [\varepsilon, T])$ (by using classical regularity results).
 For the general Robin-Neumann condition at $x=1$ it is sufficient to set $h(t):=\alpha _1 u(1,t)+ \beta _1 u_x(1,t)$ with the trajectory $u$ constructed above with 
 the Dirichlet control at $x=1$. The proof of Theorem \ref{thm2} is complete.     \qed

As a possible application, we consider the boundary control by the flatness approach 
of radial solutions of the heat equation in the ball $B(0,1)\subset \R ^N$ ($2\le N\le 3$). 
Using the radial coordinate $r=|x|$, we thus consider the system
\ba
u_{rr}+\frac{N-1}{r}u_r-u_t&=&0, \quad r\in (0,1),\ t\in (0,T), \label{W1}\\
u_r(0,t)&=&0,\quad t\in (0,T), \label{W2}\\
\alpha _1 u(1,t)+\beta _1 u_r(1,t)&=&h(t),\quad t\in (0,T) \label{W3} \\
u(r,0)&=&u_0(r), \quad r\in (0,1).\label{W4} 
\ea 
Note that Theorem \ref{thm1} cannot be applied directly to \eqref{W1}-\eqref{W4}, for \eqref{B13} fails. (Note that, in sharp contrast, the control on a 
ring-shaped domain $\{ r_0<|x|<r_1 \}$ with $r_1>r_0>0$ is fully covered by Theorem \ref{thm1}, the coefficients in \eqref{W1} being then smooth and bounded.)
 
We use the following change of variables from \cite{colton} which allows to remove the term with the first order derivative in $r$  in \eqref{W1}:
\be
u(r,t)=\tilde u(r,t) \exp (-\frac{1}{2}\int_0^r\frac{N-1}{s} ds) =\frac{\tilde u(r,t)}{r^{\frac{N-1}{2}}}\cdot
\ee 
Then \eqref{W1} becomes 
\be
\tilde u _{rr} + \frac{ (N-1)(3-N) }{4}  \frac{\tilde u}{r^2} -\tilde u _t=0.
\label{WW1}
\ee 
This equation has to be supplemented with the boundary/initial conditions
\ba
\tilde u(0,t)&=&0,\quad t\in (0,T),\label{WW2} \\
(\alpha _1 -\frac{N-1}{2}\beta _1) \tilde u (1,t)+\beta _1 \tilde u _r(1,t) &=& h(t),\quad t\in (0,T), \label{WW3} \\
\tilde u(r,0) &=& r^{\frac{N-1}{2}} u_0(r), \quad r\in (0,R). \label{WW4}
\ea
For $N=3$, \eqref{WW1} reduces to the simple heat equation $\tilde u_{rr}-\tilde u_t=0$ to which Theorem \ref{thm1} can be applied.  
In particular $\tilde u\in G^s([\varepsilon ,T],W^{2,\infty}(0,1))$.
Actually, it is well known that $\tilde u\in C^\infty ([0,1]\times [\varepsilon, T])$, so that we can write a Taylor expansion
\[
\tilde u(r,t) = r\tilde u_r(0,t) + \frac{r^3}{6}\tilde u_{rrr}(0,t) + O(r^4),
\]
where we used the fact that $\tilde u(0,t)=\tilde u_{rr}(0,t)=0$. This yields
\[
u_r(0,t)= \frac{r}{3}\tilde u_{rrr}(0,t)+ O(r^2),
\]
so that \eqref{W2} is fulfilled. \\
For $N=2$, \eqref{WW1}-\eqref{WW4} is of the form \eqref{B41}-\eqref{B44} with $\mu=1/4$.  Therefore 
Theorem \ref{thm2} can be applied to \eqref{WW1}-\eqref{WW4}. Our concern now is the derivation of 
\eqref{W2} when going back to the original variables. Recall that 
\[
u(r,t)=\frac{\tilde u (r,t)}{\sqrt{r}},\quad v(r)=-\sqrt{r}\ln r, \quad y(r)=L^{-1}\int_0^r v^{-2}(s)ds,\quad \hat u(y,t)=\frac{\tilde u(r,t)}{v(r)}=- \frac{u(r,t)}{\ln r},
\]  
so that, with $dy/dr= (Lr\ln ^2 r)^{-1}$,
\[
u_r=-\frac{1}{r} \hat u -\frac{1}{Lr \ln r} \hat u_y. 
\]
This yields at fixed $t\in (0,T)$
\[
\int_0^1(|u|^2+|u_r|^2) rdr \le \int_0^1 r\ln ^2 r |\hat u (y(r))|^2dr + C\int_0^1 \frac{\hat u^2}{r}dr + \int_0^1 \frac{|\hat u_y (y(r))|^2 }{L^2r\ln ^2r} dr=:I_1+I_2+I_3.
\]
Since $\hat u(.,t)\in W^{2,\infty}(0,1)$, both $I_1$ and $I_3$ are finite. On the other hand, using $\hat u(0,t)=0$, we obtain $|\hat u(y(r)  ,t)|\le Cy(r)=\frac{C}{|\ln r |}$, and hence $I_2<\infty$. 
Thus $\int_0^1(|u|^2+|u_r|^2) rdr <\infty$, while for $p\in (2,\infty )$
\[
\int_0^1 |u_{rr} + \frac{u_r}{r}|^p rdr = \int_0^1 |u_t|^p rdr= \int_0^1 |\hat u_t (y(r),t)|^p r|\ln r|^p dr<\infty . 
\]
Thus the function $x\to u(|x|,t)$ belongs to $W^{2,p}(B(0,1))\subset C^1(\overline{B(0,1)})$, so that  \eqref{W2} is satisfied.

\section{Appendix: proof of Corollary \ref{cor-intro}}
We apply first a reduction to a canonical form similar to \eqref{C1}-\eqref{C4} by doing exactly the same changes of variables as those described in 
Section \ref{section21}. With $u_1,u_2,y,\hat u$, and $\hat \rho$ defined as in \eqref{defu1}-\eqref{E4}, 
we infer from \eqref{B100} that 
\[
e^{-B} (av^2 e^B u_{2,x})_x= \rho v^2 u_{2,t} +ve^{-Kt} \chi_\omega f.
\]
Multiplying each term in the above equation by $L^2av^2e^{2B}$, and using the fact that $\partial _y = L a v^2 e^B \partial _x$, we arrive to 
\[
\hat u_{yy}=\hat \rho (y) \hat u_t + \chi_{\hat \omega} \hat f, 
\]
where $\hat \omega = (\hat l_1,\hat l_2) : =(y(l_1), y(l_2))$ and 
\[
\hat f(y(x),t) : =L^2 a(x) v^3(x) e^{2B(x)} e^{-Kt} \chi _\omega (x) f(x,t).
\]
Let $\hat u_0(y(x)) :=u_0(x)/v(x)$. 
Pick $\hat l_1',\hat l_2'$ such that  $\hat l_1<\hat l_1'<\hat l_2'<\hat l_2$, and a function 
$\varphi \in C^\infty ( [0,1] )$ such that $\varphi (y)=1$ for $0\le y\le \hat l_1'$ and $\varphi (y)=0$ for $\hat l_2'\le y\le 1$.  Applying Theorem \ref{thm1}, 
we can find two functions $h^1,h^2\in G^s([0,T])$ such 
that the solutions $\hat u^1,\hat u^2$ of the following systems
  \ba
\hat u^1_{yy} -\hat \rho (y) \hat u^1_t&=&0, \quad y\in (0,1),\ t\in (0,T), \label{B101}\\
\hat \alpha _0 \hat u^1(0,t)+ \hat \beta _0 \hat u^1_y(0,t)&=& 0, \quad t\in (0,T), \label{B201}\\
\hat u^1_y(1,t)&=& h^1(t), \quad t\in (0,T), \label{B301}\\
\hat u^1(y,0)&=& \hat u_0(y), \quad y\in (0,1), \label{B401}
\ea
and
  \ba
\hat u^2_{yy} -\hat \rho (y) \hat u^2_t&=&0, \quad y\in (0,1),\ t\in (0,T), \label{B102}\\
\hat u^2_y(0,t)&=& h^2(t), \quad t\in (0,T), \label{B202}\\
\hat \alpha _1\hat u^2(1,t)+\hat \beta _1 \hat u^2_y(1,t)&=&0 , \quad t\in (0,T), \label{B302}\\
\hat u^2(y,0)&=& \hat u_0(y), \quad y\in (0,1), \label{B402}
\ea
satisfy
\[
\hat u^1(y,T)=\hat u^2(y,T)=0 \ \textrm{ for all }\  y\in [0,1]. 
\]
Then it is sufficient to set 
\begin{eqnarray}
\hat u (y,t)  &:=&\varphi (y) \hat u^1 (y,t) + (1-\varphi (y) ) \hat u^2(y,t), \label{ABCDE1}\\
\hat f(y,t) &: =& \varphi '' (y) (\hat u^1(y,t)-\hat u^2(y,t) ) +2\varphi ' (y) (\hat u^1_y (y,t)  - \hat u^2_y (y,t) ). \label{ABCDE2} 
\end{eqnarray}
Note that $\hat f$ is supported in $[\hat l_1',\hat l_2']\times [0,T]$, with $\hat f\in G^s([\varepsilon, T],W^{1,1}(0,1))$ for all $\varepsilon \in (0,T)$, 
and that $\hat u$  solves 
 \ba
\hat u_{yy} -\hat \rho (y) \hat u_t&=&\chi _{\hat \omega} \hat f, \quad y\in (0,1),\ t\in (0,T), \label{B103}\\
\hat \alpha _0 \hat u(0,t)+ \hat \beta _0 \hat u_y(0,t)&=& 0, \quad t\in (0,T), \label{B203}\\
\hat \alpha _1 \hat u(1,t)+ \hat \beta _1 \hat u_y(1,t)&=& 0, \quad t\in (0,T), \label{B303}\\
\hat u(y,0)&=& \hat u_0(y), \quad y\in (0,1), \label{B403}\\
\hat u(y,T)&=& 0, \quad y\in (0,1). \label{B503}
\ea
Let 
\[
f(x,t):= \big( L^2 a(x) v^3(x) e^{2B(x)} e^{-Kt} \big) ^{-1} \hat f (y(x),t). 
\]
Then $f$ is supported in $[y^{-1}(\hat l_1'), y^{-1} (\hat l_2')] \times [0,T] \subset \omega \times [0,T]$. 
We claim that  $f\in L^2(0,T, L^2_a(\omega) )$. Indeed,  we have that
\begin{eqnarray*}
\int_0^T \!\!\! \int_\omega | f(x,t) | ^2 a(x) dxdt 
&\le& C \int_0^T \!\!\! \int_0^1 \chi_\omega (x) ( L^3 a(x) v^4(x) e^{3B(x)} e^{-2Kt} ) |f(x,t)|^2 dxdt \\
&=&  C\int_0^T\!\!\! \int_0^1 \chi _{\hat \omega} (y(x)) |\hat f ( y(x), t) |^2 \vert \frac{dy}{dx }\vert dxdt \\
&= &  C \int_0^T\!\!\! \int_{\hat\omega}   |\hat f(y,t) |^2 dydt ,
\end{eqnarray*} 
and the last integral is finite, since  $\hat f$ is given by \eqref{ABCDE2} and
$\hat u^1, \hat u^2\in L^2(0,T,H^1(0,1))$.
For $\hat u^1$, this can be seen by scaling \eqref{B101} by $\hat u^1$, integrating over $(0,1)\times (0,t)$
for $0<t\le T$, and using
Gronwall's lemma combined with Lemma \ref{lem1}. Thus  $f\in L^2(0,T, L^2_a(\omega) )$. Let 
\[
u(x,t):=e^{Kt} v(x) \hat u ( y(x) , t).
\]
Then $u$ solves \eqref{B100}-\eqref{B400} and $u(.,T)=0$. 
 \qed

\section*{Acknowledgments} The authors wish to thank Enrique Zuazua who suggested to consider $L^1$
coefficients instead of $L^\infty$ coefficients.  
\bibliographystyle{abbrv}        
\bibliography{PEREF}           

\end{document}